\newtheorem{theorem}{Theorem}[section]
\newtheorem{lem}[theorem]{Lemma}
\newtheorem*{rem}{Remark}
\newtheorem{prop}{Proposition}
\newtheorem{exmp}{Example}[section]
\definecolor{codegreen}{rgb}{0,0.6,0}
\definecolor{codegray}{rgb}{0.5,0.5,0.5}
\definecolor{codepurple}{rgb}{0.58,0,0.82}
\definecolor{backcolour}{rgb}{0.95,0.95,0.92}
\lstdefinestyle{mystyle}{
    backgroundcolor=\color{backcolour},
    commentstyle=\color{codegreen},
    keywordstyle=\color{magenta},
    numberstyle=\tiny\color{codegray},
    stringstyle=\color{codepurple},
    basicstyle=\footnotesize,
    breakatwhitespace=false,
    breaklines=true,
    captionpos=b,
    keepspaces=true,
    numbers=left,
    numbersep=5pt,
    showspaces=false,
    showstringspaces=false,
    showtabs=true,
    tabsize=2
}
\title{On a bimodal Birnbaum-Saunders distribution with applications to lifetime data}
\author{Roberto Vila$^{1}$,
Jeremias Le\~ao$^{2}$,
Helton Saulo$^{1}$,
Mirza Nabeed$^{3}$, and Manoel Santos-Neto$^{4,5}$\\
 $^{1}$Department of Statistics, University of Bras\'ilia, Bras\'ilia, BRA\\
    $^{2}$Department of Statistics, Federal University of Amazonas, Manaus, BRA\\
     $^{3}$Department of Statistics, University of Gujrat, Gujrat, PAK\\
    $^{4}$Department of Statistics, Federal University of S\~ao Carlos, S\~ao Carlos, BRA\\
    $^{5}$Department of Statistics, Federal University of Campina Grande, Campina Grande, BRA
}
\begin{document}
\maketitle

\begin{abstract}
The Birnbaum-Saunders distribution is a flexible and useful 
model which has been used in several fields. In this paper, 
a new bimodal version of this distribution based on the 
alpha-skew-normal distribution is established. We discuss some of its 
mathematical and inferential properties. We consider likelihood-based 
methods to estimate the model parameters. We carry out a Monte Carlo 
simulation study to evaluate the performance of the maximum likelihood estimators. 
For illustrative purposes, three real data sets 
are analyzed. The results indicated that the proposed model outperformed some 
existing models in the literature, in special, a recent 
bimodal extension of the Birnbaum-Saunders distribution.

\begin{keywords}
Birnbaum-Saunders distribution; Alpha-skew-normal distribution; Bimodality;
Maximum likelihood estimation;  Monte Carlo simulation.
\end{keywords}

\end{abstract}

\section{Introduction}\label{sec:1}
Despite its broad applicability in many fields, see, for example, \cite{bll:07}, \cite{b:10}, 
\cite{vslc:10}, \cite{plbl:12}, \cite{slzm:13},\cite{lmsar:14,lsml:14}, \cite{l:16} and \cite{llst:17}, the Birnbaum-Saunders (BS) 
distribution \citep{bs:69} is not suitable to model 
bimodal data. This distribution is positively skewed with positive support and is related to 
the normal distribution through the stochastic representation
\begin{equation}\label{sec1:main}
	T=\frac{\beta}{4} \left[\alpha{Z} + \sqrt{(\alpha{Z})^2+4}\right]^{2},
\end{equation}
where $T \sim \rm{BS}(\alpha, \beta)$, $Z \sim\mbox{N}(0, 1)$ and $\alpha>0$, $\beta>0$ are shape and scale
parameters, respectively. The $\rm{BS}(\alpha, \beta)$ probability 
density function (PDF) and cumulative distribution function (CDF) are respectively given by
\begin{equation}\label{sec1:02}
	f(t;\alpha,\beta)=\phi(a(t))\,
	\frac{t^{-3/2}(t+\beta)}{2\alpha\,\beta^{1/2}}
	\quad  \mbox{and} \quad
	F(t;\alpha,\beta)=\Phi(a(t)), \quad t>0,
\end{equation}
where $\phi(\cdot)$ and $\Phi(\cdot)$ are standard normal PDF and CDF, respectively, and
\begin{align}\label{at}
	a(t) = \frac{1}{\alpha} \left[\sqrt{\frac{t}{\beta}}-\sqrt{\frac{\beta}{t}}\right].
\end{align}
Note that the $k$-th derivative of $a(t)$, denoted by $a^{(k)}(t)$, satisfies
$a^{(k)}(t)>0$ (or $<0$) for $k$ odd (or $k$ even), where $k\geqslant 1$. 
Some special cases of these derivatives are
\begin{align}\label{derivates}
	a'(t)=
	{1\over 2\alpha t}
	\left[\sqrt{t\over \beta}+{\sqrt{\beta\over t}}\right],
	\quad 
	a''(t)=
	-{1\over 4\alpha t^2}
	\left[\sqrt{t\over\beta}+3{\sqrt{\beta\over t}}\right]
	\quad \text{and}\quad 
	a'''(t)={3\over 8\alpha t^3}\left[\sqrt{t\over\beta}+5{\sqrt{\beta\over t}}\right].
\end{align}
Note also that the function $a(\cdot)$ has inverse specified by \eqref{sec1:main}.
In order not to cause confusion, hereafter we will write $a^{-\perp}(\cdot)$ to denote the inverse of 
function $a(\cdot)$.

The stochastic representation in \eqref{sec1:main} allows us to obtain several ge\-neralizations of the BS model. For example, 
\cite{dl:05} assumed that ${Z}$ follows a standard symmetric distribution in the real line and obtained the class
of generalized BS distributions. On the same line, \cite{blsv:09} proposed scale-mixture BS distributions by assuming 
that $Z$ belongs to the family of scale mixture of normal distributions. Many other generalizations can be obtained in order to obtain 
a new distribution with domain on the positive numbers; see \cite{l:16}.

In general, one uses mixtures of distributions for describing bimodal data. However, it may be troublesome as identifiability problems may arise in the parameter estimation of the model; see \cite{llh:07,lly:07} and \cite{ges:11}. In this sense, new mixture-free models which have the capacity to accommodate unimodal and bimodal data are very important. Some 
asymmetric bimodal models in the real line have been discussed by \cite{ac:03}, \cite{k:05}, and \cite{mg:04}, among others. {In the context of bimodal BS models, \cite{bgkls:11} introduced a mixture distribution of two different BS models (MXBS) and studied its characteristics. On the other hand, \cite{omb:17} introduced a bimodal extension of the BS distribution, denoted by BBSO, based on the approach described in \cite{ges:11}.
	{In addition, the authors also studied} the probabilistic properties and moments of the BBSO distribution, and showed that this model can fit well both unimodal and bimodal data in comparison with the BS, 
	log-normal and skew-normal BS models. A thorough inference study on the parameters that index the BBSO distribution was addressed by \cite{fc:16}.}


In this paper, we introduce a new bimodal version of the BS distribution, denoted by BBS, 
by assuming that $Z$ in \eqref{sec1:main} 
follows an alpha-skew-normal (ANS) distribution 
discussed by \cite{e:10}. We present a 
statistical methodology based on the proposed BBS distribution including model formulation, mathematical properties, estimation and inference 
based on the maximum likelihood (ML) method. We evaluate 
the performance of the ML estimators by Monte Carlo (MC) simulations. { Three 
	real data illustrations indicated that the proposed BBS model provides better adjustment compared to the BBSO model proposed by \cite{omb:17}. The proposed BBS distribution has some advantages over existing bimodal BS models: (i) unlike the BBSO model, the proposed BBS distribution does not suffer from convergence problems in the {optimization} process of the profile log-likelihood function as pointed out by \cite{fc:16}; (ii) the proposed BBS distribution does not present {identifiability} problems commonly encountered in mixture models, such as the MXBS distribution; and (iii) the proposed model does not present label switching problems \citep{celeuxetal:06}, that is, in a bimodal context with two groups, during the estimation an individual who was in group B can incorrectly stay in A and vice versa.}

The rest of the paper proceeds as follows. In Section \ref{sec:2},
we introduce the BBS distribution and discuss some related results. In Section~\ref{sec:3}, we consider likelihood-based methods to estimate the model parameters and to perform inference. In Section~\ref{sec:4}, we carry out a  MC simulation study to evaluate the performance of the ML estimators. 
In Section~\ref{sec:5}, we illustrate the proposed methodology with three real data sets. Finally, 
in Section~\ref{sec:6}, we make some concluding remarks and discuss future research.

\section{The BBS distribution}\label{sec:2}
If a random variable (RV) $X$ has an ASN distribution with 
parameter $\delta$, denoted by $X\sim\text{ASN}(\delta)$, 
then its PDF and CDF are given by 
\begin{equation}\label{sec2:01}
	g(x)=\frac{(1-\delta{x})^2+1}{2+\delta^2}\,\phi(x)\,\, \mbox{and} \,\, 
	G(x)=\Phi(x)+\delta\left(\frac{2-\delta{x}}{2+\delta^2}\right)\phi(x),
\end{equation}
where $x,\delta\in\mathbb{R}$ and $\delta$ is an asymmetric parameter 
that controls the uni-bimodality effect; see \cite{e:10}. The PDF of the BS distribution, based on the alpha-skew-normal model, is given by
\begin{equation}\label{sec2:02}
	f(t;\alpha,\beta,\delta)
	=
	\frac{(1-\delta{a(t)})^2+1}{2+\delta^2} \, \phi(a(t)) \,
	\frac{t^{-3/2}(t+\beta)}{2\alpha\,\beta^{1/2}},
	\quad t>0,
\end{equation}
where $a(\cdot)$ is as in \eqref{at} and the notation 
$T\sim\text{BBS}(\alpha,\beta,\delta)$ is used. 
If $\delta=0$, then the classical
$\text{BS}(\alpha,\beta)$ distribution is obtained. The 
corresponding $\text{BBS}(\alpha,\beta,\delta)$ CDF is given by
\begin{equation}\label{sec2:03}
	F(t;\alpha,\beta,\delta)
	=
	\Phi(a(t))
	+
	\delta\left(\frac{2-\delta{a(t)}}{2+\delta^2}\right) \phi(a(t)),
	\quad t>0.
\end{equation}
Note that $f(t;\alpha,\beta,\delta)=g(a(t))a'(t)=(G\circ a)'(t)$,
$F(t;\alpha,\beta,\delta)=(G\circ a)(t)$ and
$\lim_{\delta\to\pm\infty}\{F(t;\alpha,\beta,\delta)+\phi(a(t))a(t)\}=F(t;\alpha,\beta)$.

Differentiating  the PDF of the BBS distribution \eqref{sec2:02} we obtain
\begin{align}
	f'(t;\alpha,\beta,\delta)&=g'(a(t))[a'(t)]^2+g(a(t))a''(t) \quad \text{and} \label{1-derivate}
	\\[0,2cm]	
	f''(t;\alpha,\beta,\delta)&= g''(a(t))[a'(t)]^3+3g'(a(t))a'(t)a''(t)+g(a(t))a'''(t), \label{2-derivate}
\end{align}
where
$
g''(x)
=
-xg'(x)
+
\phi(x)
\left(-3\delta^2 x^2 + 4\delta x - 2(1-\delta^2)\right)/(2+\delta^2)
$ 
and 
\[
g'(x)=
{\phi(x)\over 2+\delta^2}
\left(
-\delta^2 x^3+2\delta x^2-2(1-\delta^2)x-2\delta
\right).
\]

The survival and hazard functions, denoted by SF and HR, respectively, of the BBS distribution are given by 
$
S(t;\alpha,\beta,\delta)= 1-(G\circ a)(t)
$
and
\begin{equation*}\label{sec3:01}
	h(t;\alpha,\beta,\delta)
	=
	\frac{f(t;\alpha,\beta,\delta)}{1-F(t;\alpha,\beta,\delta)}
	=
	{(G\circ a)'(t)\over S(t;\alpha,\beta,\delta)},
	\quad t>0,
\end{equation*}
respectively. From Figure \ref{fig:1} we note some different shapes of the BBS PDF for different combinations of parameters. These figures reveal clearly the bimodality effect caused by the parameter $\delta$. Also, Figure~\ref{fig:3hazards} shows unimodal and bimodal shapes for the BBS HR.

\vspace{-0.2cm}
\begin{figure}[h!]
	\vspace{-0.25cm}
	\centering
	\psfrag{0}[c][c]{\scriptsize{0}}
	\psfrag{1}[c][c]{\scriptsize{1}}
	\psfrag{2}[c][c]{\scriptsize{2}}
	\psfrag{3}[c][c]{\scriptsize{3}}
	\psfrag{4}[c][c]{\scriptsize{4}}
	\psfrag{5}[c][c]{\scriptsize{5}}
	\psfrag{6}[c][c]{\scriptsize{6}}
	\psfrag{8}[c][c]{\scriptsize{8}}
	\psfrag{10}[c][c]{\scriptsize{10}}
	\psfrag{12}[c][c]{\scriptsize{12}}
	\psfrag{15}[c][c]{\scriptsize{15}}
	\psfrag{20}[c][c]{\scriptsize{20}}
	\psfrag{25}[c][c]{\scriptsize{25}}
	\psfrag{30}[c][c]{\scriptsize{30}}
	\psfrag{0.00}[c][c]{\scriptsize{0.00}}
	\psfrag{0.05}[c][c]{\scriptsize{0.05}}
	\psfrag{0.10}[c][c]{\scriptsize{0.10}}
	\psfrag{0.15}[c][c]{\scriptsize{0.15}}
	\psfrag{0.20}[c][c]{\scriptsize{0.20}}
	\psfrag{0.25}[c][c]{\scriptsize{0.25}}
	\psfrag{0.30}[c][c]{\scriptsize{0.30}}
	\psfrag{0.0}[c][c]{\scriptsize{0.0}}
	\psfrag{0.2}[c][c]{\scriptsize{0.2}}
	\psfrag{0.3}[c][c]{\scriptsize{0.3}}
	\psfrag{0.4}[c][c]{\scriptsize{0.4}}
	\psfrag{0.5}[c][c]{\scriptsize{0.5}}
	\psfrag{0.6}[c][c]{\scriptsize{0.6}}
	\psfrag{0.8}[c][c]{\scriptsize{0.8}}
	\psfrag{1.0}[c][c]{\scriptsize{1.0}}
	\psfrag{1.2}[c][c]{\scriptsize{1.2}}
	\psfrag{1.5}[c][c]{\scriptsize{1.5}}
	\psfrag{2.0}[c][c]{\scriptsize{2.0}}
	\psfrag{2.5}[c][c]{\scriptsize{2.5}}
	\psfrag{3.0}[c][c]{\scriptsize{3.0}}
	\psfrag{de}[c][c]{\scriptsize{PDF}}
	\psfrag{xx}[c][c]{\scriptsize{$t$}}
	
	\psfrag{a}[l][l]{\tiny{$\alpha=0.10,\delta=-10$}}
	\psfrag{b}[l][l]{\tiny{$\alpha=0.25,\delta=-10$}}
	\psfrag{c}[l][l]{\tiny{$\alpha=0.50,\delta=-10$}}
	\psfrag{d}[l][l]{\tiny{$\alpha=1.00,\delta=-10$}}
	\psfrag{e}[l][l]{\tiny{$\alpha=1.50,\delta=-10$}}
	
	\psfrag{g}[l][c]{\tiny{$\alpha=0.10,\delta=5$}}
	\psfrag{h}[l][c]{\tiny{$\alpha=0.25,\delta=5$}}
	\psfrag{i}[l][c]{\tiny{$\alpha=0.50,\delta=5$}}
	\psfrag{j}[l][c]{\tiny{$\alpha=1.00,\delta=5$}}
	\psfrag{k}[l][c]{\tiny{$\alpha=1.50,\delta=5$}}
	
	\psfrag{m}[l][c]{\tiny{$\alpha=0.10,\delta=-2$}}
	\psfrag{n}[l][c]{\tiny{$\alpha=0.25,\delta=-2$}}
	\psfrag{o}[l][c]{\tiny{$\alpha=0.50,\delta=-2$}}
	\psfrag{p}[l][c]{\tiny{$\alpha=1.00,\delta=-2$}}
	\psfrag{q}[l][c]{\tiny{$\alpha=1.50,\delta=-2$}}
	
	\psfrag{s}[l][c]{\tiny{$\alpha=0.10,\beta=3,\delta=10$}}
	\psfrag{t}[l][c]{\tiny{$\alpha=0.25,\beta=3,\delta=10$}}
	\psfrag{u}[l][c]{\tiny{$\alpha=0.50,\beta=3,\delta=10$}}
	\psfrag{v}[l][c]{\tiny{$\alpha=1.00,\beta=3,\delta=10$}}
	\psfrag{x}[l][c]{\tiny{$\alpha=1.50,\beta=3,\delta=10$}}

	{\includegraphics[height=6.2cm,width=4.2cm,angle=-90]{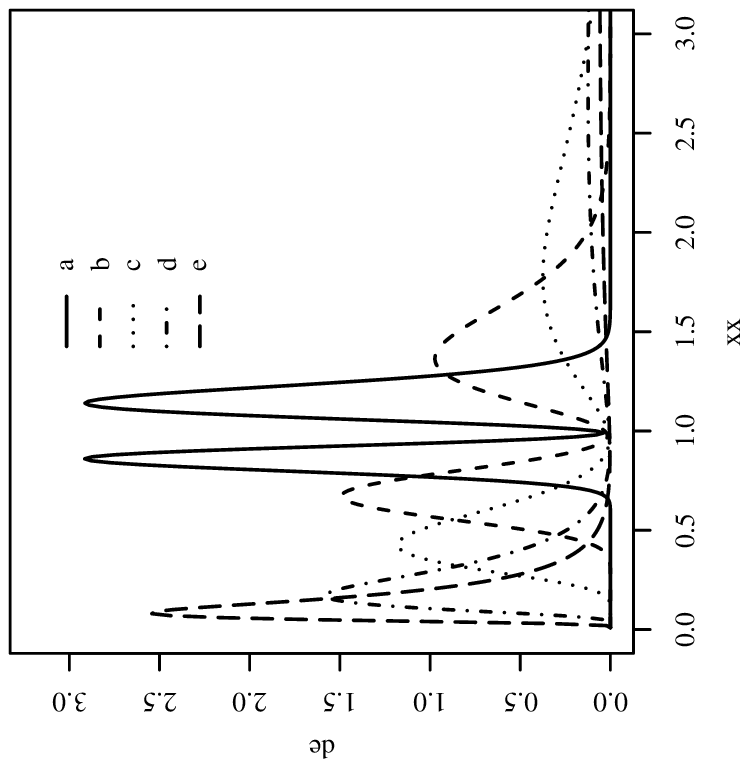}}
	{\includegraphics[height=6.2cm,width=4.2cm,angle=-90]{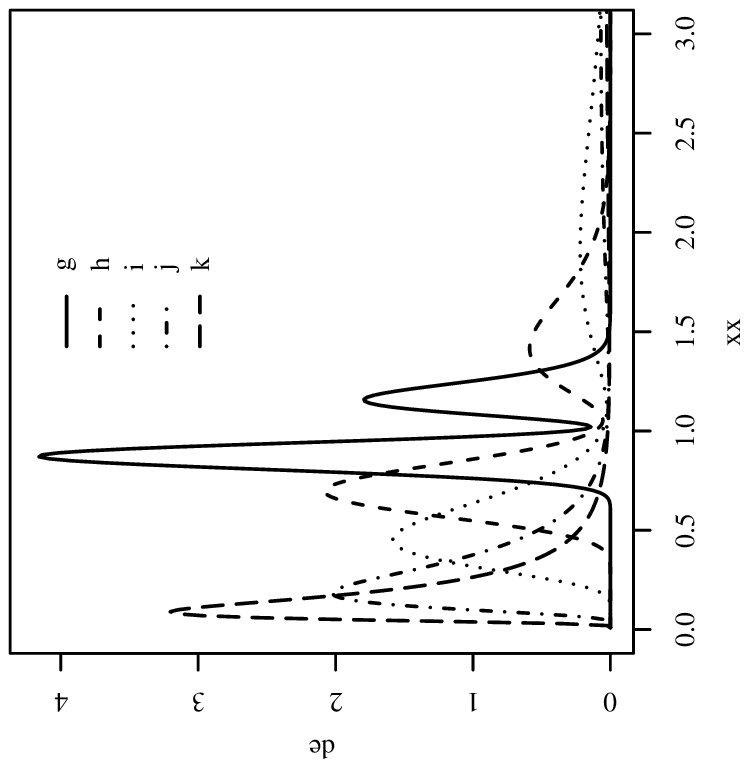}}\\
	{\includegraphics[height=6.2cm,width=4.2cm,angle=-90]{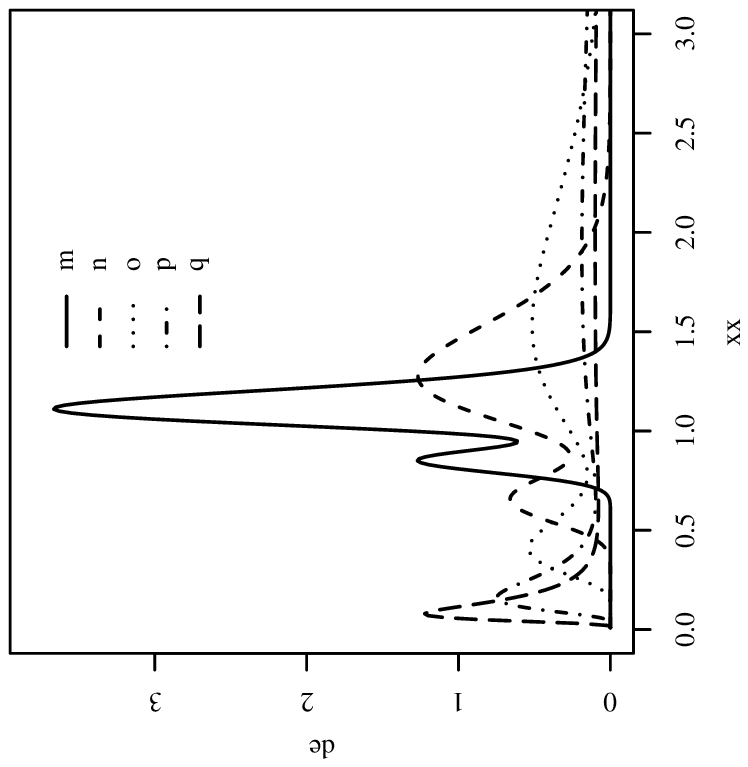}}
	{\includegraphics[height=6.2cm,width=4.2cm,angle=-90]{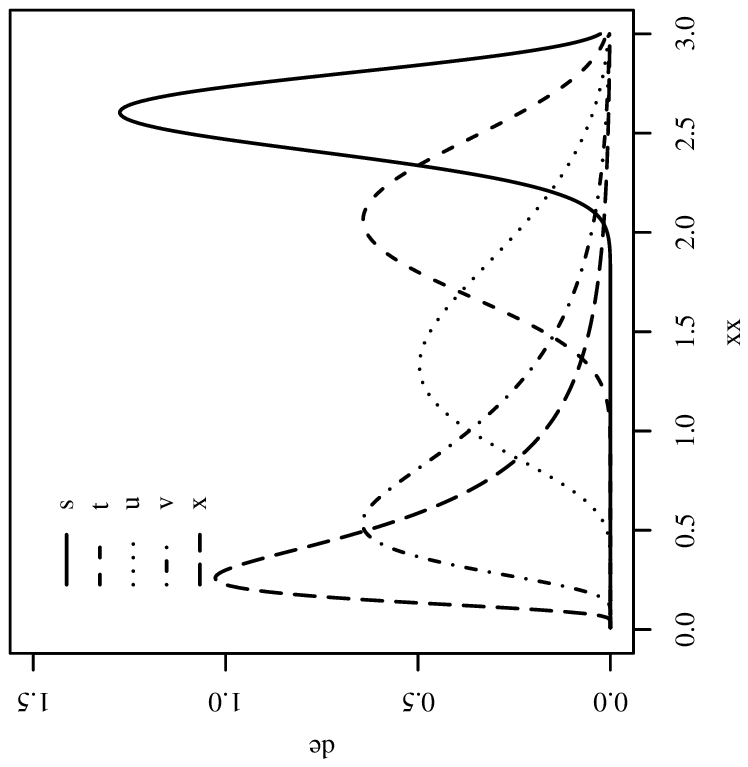}}

	\vspace{-0.2cm}
	\caption{BBS PDFs for some parameter values ($\beta=1.0$).}\label{fig:1}
\end{figure}

\subsection{Some properties of the BBS distribution} \label{sec:prop}


\begin{lem}\label{lemma-mono}
	Let $t_0=\beta [(\alpha/\delta)+\sqrt{(\alpha/\delta)^2+4}]^2/4$.
	The PDF of the BBS distribution $t\mapsto f(t;\alpha,\beta,\delta)$ 
	defined in \eqref{sec2:02}
	is a decreasing function when
	\begin{enumerate}
		\item $\delta= 0$ and $t>\beta$, or
		\item $t<t_0$ ($t>t_0$), 
		for each $\delta>0$ ($\delta<0$).
	\end{enumerate}
	\begin{proof}
		1. If $\delta=0$, we have $f(t;\alpha,\beta,\delta)=f(t;\alpha,\beta)=\phi(a(t))a'(t)$, $t>0$.
		Since $a'(t)>0$ and $a''(t)<0$ (see \eqref{derivates}) we have that 
		$t\mapsto\phi(a(t))$ and $t\mapsto a'(t)$ are
		decreasing functions whenever $t>\beta$. Therefore, since 
		the PDF of the BBS distribution is a product of nonnegative decreasing functions, it is 
		decreasing for each $t>\beta$.
		
		2. Let
		$r(x) =((1-\delta{x})^2+1)/(2+\delta^2)$, $x\in\mathbb{R}$.
		Note that  $(r\circ a)'(t)=-2\delta(1-\delta a(t)) a'(t)/(2+\delta^2)$, $t>0$.
		If $\delta>0$ ( $\delta<0$ ) then $(r\circ a)'(t)<0$ whenever $t<t_0$ ( $t>t_0$ ). 
		Since, by Item 1, the function $t\mapsto f(t;\alpha,\beta)$ is decreasing and
		$f(t;\alpha,\beta,\delta)=r(a(t))f(t;\alpha,\beta)$, $t>0$, the proof follows.
	\end{proof}
\end{lem}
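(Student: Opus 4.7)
The strategy is to exploit the factorization
\[
f(t;\alpha,\beta,\delta) = r(a(t))\, f(t;\alpha,\beta),
\qquad r(x) = \frac{(1-\delta x)^2 + 1}{2+\delta^2} \geqslant 0,
\]
which isolates the ASN ``tilt'' $r$ from the classical BS backbone $f(t;\alpha,\beta) = \phi(a(t))\, a'(t)$. The two items of the lemma then correspond to (i) the trivial case where the tilt disappears, and (ii) the case where the tilt contributes an additional nonnegative decreasing factor.

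For Item 1, with $\delta = 0$, the tilt reduces to $r \equiv 1$, so $f(t;\alpha,\beta,0) = \phi(a(t))\, a'(t)$. Since $a$ is strictly increasing with $a(\beta) = 0$, for $t > \beta$ one has $a(t) > 0$, and on this interval $\phi$ is strictly decreasing; therefore $t \mapsto \phi(a(t))$ is decreasing on $(\beta, \infty)$. The inequality $a''(t) < 0$ from \eqref{derivates} makes $a'$ itself decreasing. A product of two positive decreasing functions is decreasing, which establishes Item 1.

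For Item 2, I would first compute $(r \circ a)'(t) = r'(a(t))\, a'(t) = -2\delta(1-\delta a(t))\, a'(t)/(2+\delta^2)$. Because $a'(t) > 0$, the sign of $(r \circ a)'(t)$ coincides with the sign of $r'$ evaluated at $a(t)$. Since $r$ is a convex parabola in $x$ with its unique minimum at $x = 1/\delta$, this sign changes precisely when $a(t)$ crosses $1/\delta$. By the monotonicity of $a$ together with the identity $a(t_0) = 1/\delta$ (which holds because $t_0 = a^{-\perp}(1/\delta)$), the crossing in $a(t)$ translates into a crossing at $t = t_0$, and the appropriate half-line around $t_0$ (depending on the sign of $\delta$) makes $(r \circ a)$ decreasing. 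Combined with Item 1, where $f(\cdot;\alpha,\beta)$ is already decreasing, the product representation immediately gives that $f(\cdot;\alpha,\beta,\delta)$ is decreasing on the prescribed interval.

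The main obstacle is the sign bookkeeping in the $(r \circ a)'$ calculation, which I expect to be the delicate part, particularly when $\delta < 0$: dividing through by $\delta$ flips inequalities, and one has to carefully match ``$a(t)$ on the decreasing side of $r$'' to the correct side of $t_0$. A secondary point is that Item 1 only yields monotonic decrease of $f(t;\alpha,\beta)$ on $(\beta, \infty)$, so when invoking Item 1 inside Item 2 one should implicitly restrict to the intersection with $(\beta, \infty)$; the product-of-decreasing-functions argument is cleanest there.
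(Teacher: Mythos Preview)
Your approach is essentially identical to the paper's: factorize $f(t;\alpha,\beta,\delta)=r(a(t))\,f(t;\alpha,\beta)$, handle Item~1 via the product $\phi(a(t))\,a'(t)$ of positive decreasing functions on $t>\beta$, and for Item~2 compute $(r\circ a)'(t)=-2\delta(1-\delta a(t))a'(t)/(2+\delta^2)$, read off the sign relative to $t_0=a^{-\perp}(1/\delta)$, and combine with Item~1. The caveat you flag---that Item~1 only guarantees monotonicity of $f(\cdot;\alpha,\beta)$ on $(\beta,\infty)$, so the product argument in Item~2 is clean only on the intersection with $(\beta,\infty)$---is well spotted; the paper's own proof invokes Item~1 without addressing this restriction either.
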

\begin{prop}\label{prop:01}
	Let $T\sim\text{BBS}(\alpha,\beta,\delta)$ as defined in \eqref{sec2:01}. Then, 
	\begin{enumerate}
		\item $a(T)\sim\text{ASN}(\delta)$;
		\item $cT\sim\text{BBS}(\alpha,c\beta,\delta)$, with $c>0$;
		\item $T^{-1}\sim\text{BBS}(\alpha,\beta^{-1},-\delta)$.
	\end{enumerate}
\end{prop}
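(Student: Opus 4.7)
The plan is to prove all three parts by direct manipulation of the CDF, exploiting the factorization $F(t;\alpha,\beta,\delta)=(G\circ a)(t)$ given earlier, together with the explicit form of $a(\cdot)$ in \eqref{at}.

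For Part 1, I would observe that $a'(t)>0$ (by \eqref{derivates}) so $a$ is a strictly increasing bijection from $(0,\infty)$ onto $\mathbb{R}$ with inverse $a^{-\perp}$ given by \eqref{sec1:main}. Then for any $x\in\mathbb{R}$,
\[
\mathbb{P}(a(T)\leqslant x)=\mathbb{P}(T\leqslant a^{-\perp}(x))=F(a^{-\perp}(x);\alpha,\beta,\delta)=G(a(a^{-\perp}(x)))=G(x),
\]
which is precisely the ASN$(\delta)$ CDF in \eqref{sec2:01}.

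For Parts 2 and 3, the essential observation is a pair of scaling/inversion identities for the map $a$: for $c>0$ one checks directly from the formula \eqref{at} that
\[
a(t/c;\alpha,\beta)=\frac{1}{\alpha}\!\left[\sqrt{\frac{t}{c\beta}}-\sqrt{\frac{c\beta}{t}}\right]=a(t;\alpha,c\beta),
\qquad
a(1/t;\alpha,\beta)=-a(t;\alpha,\beta^{-1}).
\]
The first identity, applied to $\mathbb{P}(cT\leqslant s)=F(s/c;\alpha,\beta,\delta)$, immediately yields Part 2 after substituting into the CDF \eqref{sec2:03}.

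Part 3 is the one that requires slightly more care, and I expect it to be the main obstacle bookkeeping-wise. Starting from $\mathbb{P}(T^{-1}\leqslant u)=1-F(1/u;\alpha,\beta,\delta)$, I would substitute the inversion identity above and use the standard parities $\Phi(-x)=1-\Phi(x)$ and $\phi(-x)=\phi(x)$ to rewrite
\[
F(1/u;\alpha,\beta,\delta)=1-\Phi(a(u;\alpha,\beta^{-1}))+\delta\,\frac{2+\delta\,a(u;\alpha,\beta^{-1})}{2+\delta^2}\,\phi(a(u;\alpha,\beta^{-1})).
\]
Subtracting from $1$ and factoring out the sign of $\delta$ yields
\[
1-F(1/u;\alpha,\beta,\delta)=\Phi(a(u;\alpha,\beta^{-1}))+(-\delta)\,\frac{2-(-\delta)\,a(u;\alpha,\beta^{-1})}{2+(-\delta)^2}\,\phi(a(u;\alpha,\beta^{-1})),
\]
which is exactly $F(u;\alpha,\beta^{-1},-\delta)$ by \eqref{sec2:03}. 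The only delicate step is keeping track of the sign changes so that the resulting skewness parameter emerges as $-\delta$; everything else is routine substitution.
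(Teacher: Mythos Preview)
Your proof is correct and follows essentially the same approach as the paper: both rely on the change-of-variable relation between $T$ and $a(T)$ and on the scaling/inversion behaviour of the map $a(\cdot)$. The only minor stylistic difference is that the paper differentiates to obtain the PDF of $a(T)$ for Part~1 and then waves away Parts~2--3 as ``convenient variable transformations,'' whereas you stay at the CDF level throughout (using $F=G\circ a$) and make the identities $a(t/c;\alpha,\beta)=a(t;\alpha,c\beta)$ and $a(1/t;\alpha,\beta)=-a(t;\alpha,\beta^{-1})$ explicit; your version is cleaner and more self-contained.
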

\begin{proof}
	Since 
	$\mathbb{P}(a(T)\leqslant t)=F(a^{-\perp}(t);\alpha,\beta,\delta)$, we have 
	that the 
	PDF
	of $a(T)$ is equal to 
	$
	{f(a^{-\perp}(t);\alpha,\beta,\delta)}=g(t) a'(a^{-\perp}(t)).
	$
	Then
	$a(T)\sim\text{ASN}(\delta)$. 
	The proof of the Items 2 and 3 are immediate, 
	after making convenient variables transformations.
\end{proof}

\begin{figure}[h!]
	\vspace{-0.25cm}
	\centering
	\psfrag{0}[c][c]{\scriptsize{0}}
	\psfrag{1}[c][c]{\scriptsize{1}}
	\psfrag{2}[c][c]{\scriptsize{2}}
	\psfrag{3}[c][c]{\scriptsize{3}}
	\psfrag{4}[c][c]{\scriptsize{4}}
	\psfrag{5}[c][c]{\scriptsize{5}}
	\psfrag{6}[c][c]{\scriptsize{6}}
	\psfrag{8}[c][c]{\scriptsize{8}}
	\psfrag{10}[c][c]{\scriptsize{10}}
	\psfrag{12}[c][c]{\scriptsize{12}}
	\psfrag{15}[c][c]{\scriptsize{15}}
	\psfrag{20}[c][c]{\scriptsize{20}}
	\psfrag{25}[c][c]{\scriptsize{25}}
	\psfrag{30}[c][c]{\scriptsize{30}}
	\psfrag{0.00}[c][c]{\scriptsize{0.00}}
	\psfrag{0.05}[c][c]{\scriptsize{0.05}}
	\psfrag{0.10}[c][c]{\scriptsize{0.10}}
	\psfrag{0.15}[c][c]{\scriptsize{0.15}}
	\psfrag{0.20}[c][c]{\scriptsize{0.20}}
	\psfrag{0.25}[c][c]{\scriptsize{0.25}}
	\psfrag{0.30}[c][c]{\scriptsize{0.30}}
	\psfrag{0.0}[c][c]{\scriptsize{0.0}}
	\psfrag{0.2}[c][c]{\scriptsize{0.2}}
	\psfrag{0.3}[c][c]{\scriptsize{0.3}}
	\psfrag{0.4}[c][c]{\scriptsize{0.4}}
	\psfrag{0.5}[c][c]{\scriptsize{0.5}}
	\psfrag{0.6}[c][c]{\scriptsize{0.6}}
	\psfrag{0.8}[c][c]{\scriptsize{0.8}}
	\psfrag{1.0}[c][c]{\scriptsize{1.0}}
	\psfrag{1.2}[c][c]{\scriptsize{1.2}}
	\psfrag{1.5}[c][c]{\scriptsize{1.5}}
	\psfrag{2.0}[c][c]{\scriptsize{2.0}}
	\psfrag{2.5}[c][c]{\scriptsize{2.5}}
	\psfrag{3.0}[c][c]{\scriptsize{3.0}}
	\psfrag{HF}[c][c]{\scriptsize{HR}}
	\psfrag{t}[c][c]{\scriptsize{$t$}}
	
	\psfrag{a}[l][c]{\tiny{$\alpha=0.10,\delta=10$}}
	\psfrag{b}[l][c]{\tiny{$\alpha=0.25,\delta=10$}}
	\psfrag{c}[l][c]{\tiny{$\alpha=0.50,\delta=10$}}
	\psfrag{d}[l][c]{\tiny{$\alpha=1.00,\delta=10$}}
	\psfrag{e}[l][c]{\tiny{$\alpha=1.50,\delta=10$}}

	\psfrag{g}[l][c]{\tiny{$\alpha=0.10,\delta=0.1$}}
	\psfrag{h}[l][c]{\tiny{$\alpha=0.25,\delta=0.1$}}
	\psfrag{i}[l][c]{\tiny{$\alpha=0.50,\delta=0.1$}}
	\psfrag{j}[l][c]{\tiny{$\alpha=1.00,\delta=0.1$}}
	\psfrag{k}[l][c]{\tiny{$\alpha=1.50,\delta=0.1$}}
	
	\psfrag{m}[l][c]{\tiny{$\alpha=0.5,\delta=-2$}}
	\psfrag{n}[l][c]{\tiny{$\alpha=0.5,\delta=1$}}
	\psfrag{o}[l][c]{\tiny{$\alpha=0.5,\delta=0$}}
	\psfrag{p}[l][c]{\tiny{$\alpha=0.5,\delta=1$}}
	\psfrag{q}[l][c]{\tiny{$\alpha=0.5,\delta=2$}}
	
	\psfrag{s}[l][c]{\tiny{$\alpha=0.1,\beta=0.5,\delta=2$}}
	\psfrag{t}[l][c]{\tiny{$\alpha=0.1,\beta=0.8,\delta=2$}}
	\psfrag{u}[l][c]{\tiny{$\alpha=0.1,\beta=1.0,\delta=2$}}
	\psfrag{v}[l][c]{\tiny{$\alpha=0.1,\beta=1.2,\delta=2$}}
	\psfrag{x}[l][c]{\tiny{$\alpha=0.1,\beta=1.5,\delta=2$}}
	
	{\includegraphics[height=6.2cm,width=4.2cm,angle=-90]{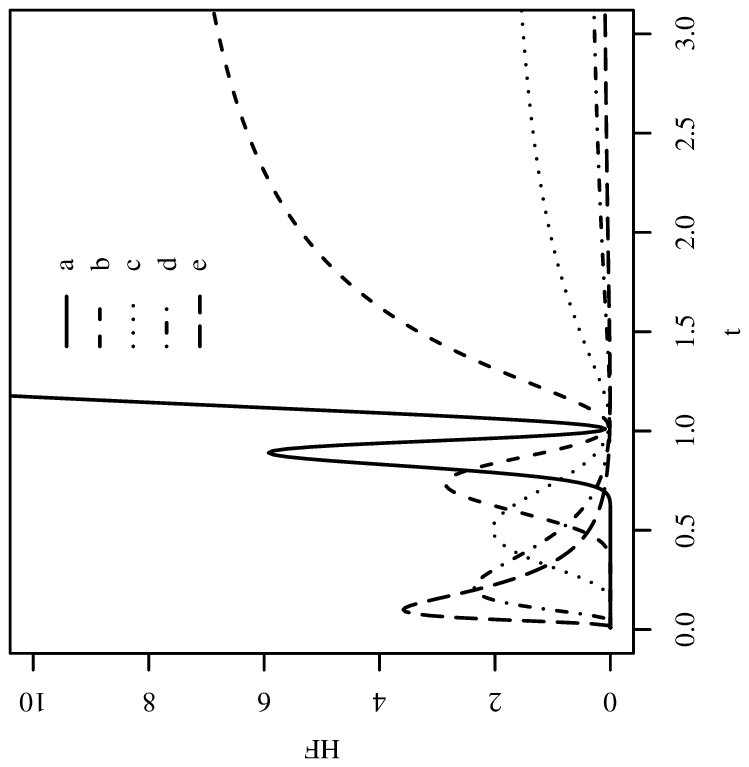}}
	{\includegraphics[height=6.2cm,width=4.2cm,angle=-90]{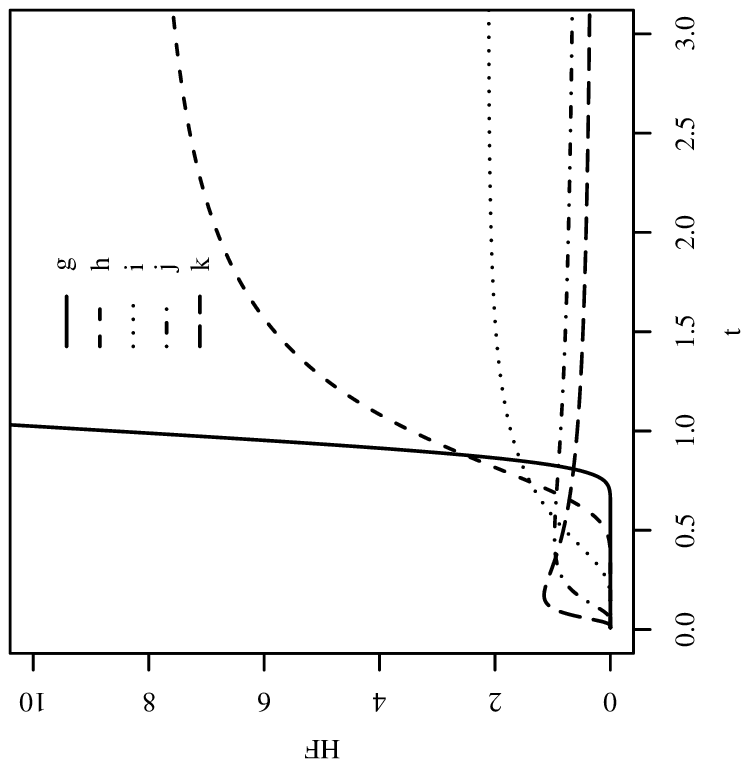}}
	{\includegraphics[height=6.2cm,width=4.2cm,angle=-90]{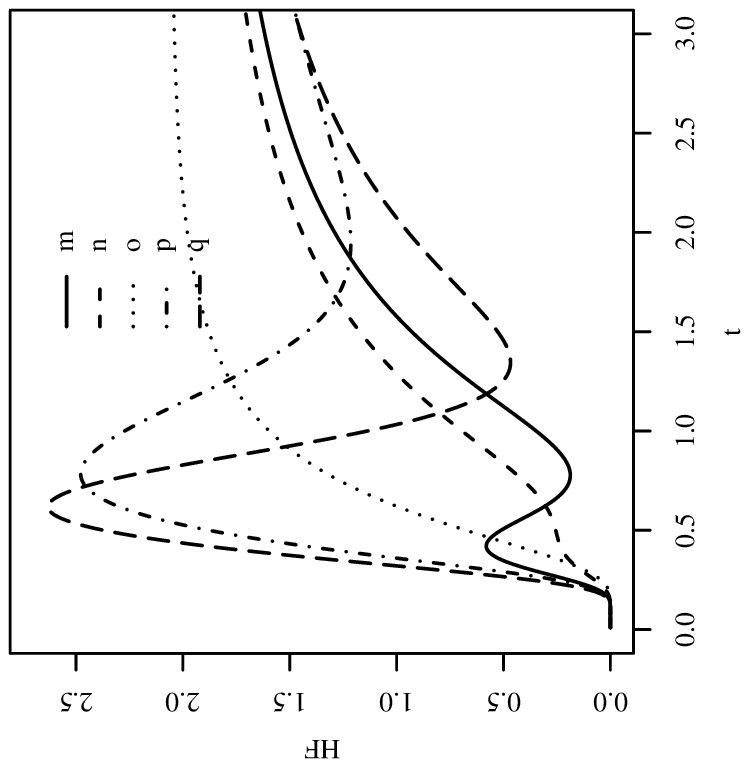}}
	{\includegraphics[height=6.2cm,width=4.2cm,angle=-90]{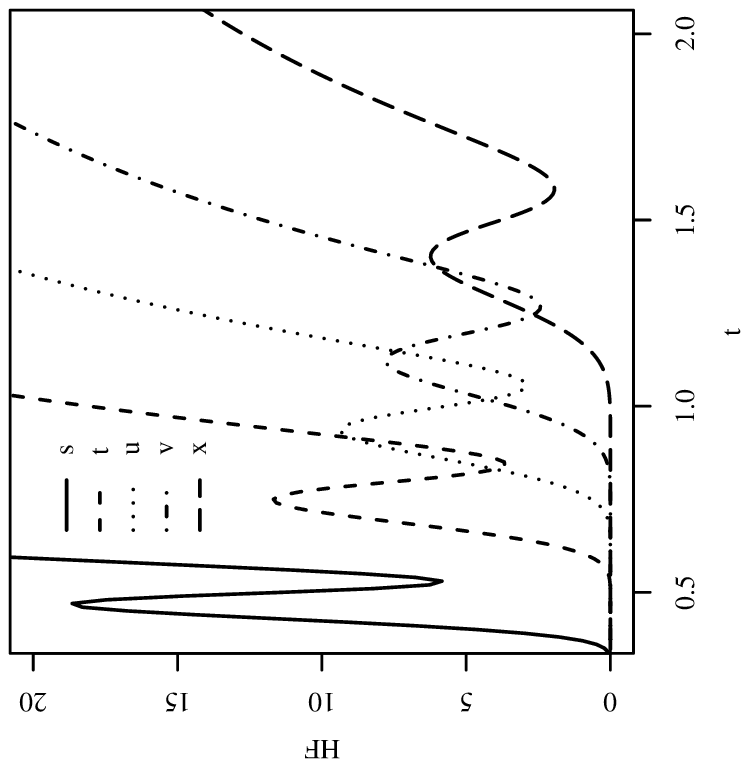}}
	\vspace{-0.2cm}
	\caption{BBS HRs for some parameter values ($\beta=1.0$).}\label{fig:3hazards}
\end{figure}
\begin{prop}\label{prop-exp}
	Let $T\sim\mbox{BBS}(\alpha,\beta,\delta)$ and $X \sim \mbox{ASN}(\delta)$ 
	and suppose $\mathbb{E}[T^{n}]$ exists, $n\geqslant 1$. Then, we have
	\begin{enumerate}
		\item 
		$
		\mathbb{E}[T] 
		=
		{\beta\over 2}
		\left(-\alpha\,{2\delta\over 2+\delta^2}+{\omega}_{0,1}\right);
		$
		\item 
		$
		\mathbb{E}[T^2]
		=
		\left({\beta\over 2}\right)^2
		\left(4+2\alpha^2\,{2+3\delta^2\over 2+\delta^2}+2\alpha\omega_{1,1} \right);
		$
		\item 
		$
		\mathbb{E}[T^3]
		=
		\left({\beta\over 2}\right)^3
		\left(
		-24\alpha(\alpha^2+1)\,{\delta\over 2 +\delta^2}
		+3\alpha^2\omega_{2,1} +\alpha\omega_{1,1}
		+
		\omega_{0,3}
		\right);
		$
		\item 
		$
		\mathbb{E}[T^4]
		=
		\left({\beta\over 2}\right)^4
		\left(
		16+{8\alpha^2}\,{8+12\delta^2+6\alpha^2+15\delta^2\alpha^2 \over 2+\delta^2}+
		4\alpha(\alpha^2\omega_{3,1}+\omega_{1,3})
		\right)
		$
		\text{and}
		\item
		$
		\mathrm{Var}[T]
		=
		\left({\beta\over 2}\right)^2
		\left(
		2\alpha^2\,{4+6\delta^2+3\delta^4\over (2+\delta^2)^2} 
		+
		\omega_{0,1}^2+2\alpha (\omega_{1,1}+\omega_{0,1}\,{2\delta\over 2+\delta^2})
		\right),
		$
	\end{enumerate}
	where
	${\omega}_{r,k}=\mathbb{E}[X^{r}(\sqrt{\alpha^{2}X^{2}+4})^k]$.
\end{prop}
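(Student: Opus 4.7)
My plan rests on the stochastic representation of $T$. Combining~\eqref{sec1:main} with Proposition~\ref{prop:01}(1), any $T\sim\text{BBS}(\alpha,\beta,\delta)$ can be written as
\[
T = \frac{\beta}{4}\bigl[\alpha X + \sqrt{\alpha^{2} X^{2} + 4}\,\bigr]^{2}, \qquad X\sim\text{ASN}(\delta).
\]
Setting $Y = \alpha X + \sqrt{\alpha^{2}X^{2}+4}$, so that $T^{n}=(\beta/4)^{n}Y^{2n}$, I would expand $Y^{2n}$ by the binomial theorem,
\[
Y^{2n} = \sum_{k=0}^{2n}\binom{2n}{k}\alpha^{k} X^{k}\bigl(\sqrt{\alpha^{2}X^{2}+4}\,\bigr)^{2n-k},
\]
and take expectations termwise. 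Those terms with $2n-k$ \emph{odd} are, by definition, $\omega_{k,2n-k}$ and cannot be simplified further; they account for every $\omega$-term appearing in items 1--4. Those with $2n-k$ \emph{even} satisfy $(\sqrt{\alpha^{2}X^{2}+4})^{2m}=(\alpha^{2}X^{2}+4)^{m}$, a polynomial in $X$, so after another binomial expansion their expectations reduce to polynomial moments of $X$.

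The closed-form polynomial moments of $X\sim\text{ASN}(\delta)$ follow directly from the PDF in~\eqref{sec2:01}: for every integer $j\geqslant 0$,
\[
\mathbb{E}[X^{j}] = \frac{1}{2+\delta^{2}}\bigl(2\mu_{j} - 2\delta\,\mu_{j+1} + \delta^{2}\mu_{j+2}\bigr),
\]
where $\mu_{k}$ is the $k$-th $N(0,1)$ moment ($\mu_{2k+1}=0$ and $\mu_{2k}=(2k-1)!!$). In particular this yields $\mathbb{E}[X]=-2\delta/(2+\delta^{2})$ and $\mathbb{E}[X^{2}]=(2+3\delta^{2})/(2+\delta^{2})$, precisely the rational factors of $\delta$ appearing throughout items 1--4.

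For items 1--4 I would carry out the binomial expansion separately for $n=1,2,3,4$, replace each even-index expectation by the corresponding combination of ASN polynomial moments computed above, retain the odd-index pieces as $\omega_{k,2n-k}$, and collect coefficients to match the compact forms stated. Item~5 is a direct consequence of items 1 and 2 via
\[
\mathrm{Var}[T] = \mathbb{E}[T^{2}] - (\mathbb{E}[T])^{2};
\]
expanding the square of $\mathbb{E}[T]$ contributes the $\omega_{0,1}^{2}$ and the mixed $\omega_{0,1}\,\delta/(2+\delta^{2})$ terms, and the $\alpha^{2}$ rational coefficient consolidates via the algebraic identity $(2+3\delta^{2})(2+\delta^{2})-2\delta^{2}=4+6\delta^{2}+3\delta^{4}$.

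The main obstacle is not conceptual but purely computational: for $n=3,4$ the expansion of $Y^{2n}$ generates many monomials, and assembling the resulting $\delta$-rational functions into the compact forms of the statement requires careful bookkeeping. I would therefore organise each calculation by splitting $Y^{2n}$ into an even-power piece (a polynomial in $X$, reducible to explicit rational functions of $\delta$) and an odd-power piece (the residual $\omega$-terms), which keeps the algebra tractable and makes the matching with the stated expressions systematic.
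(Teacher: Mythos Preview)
Your strategy is exactly the paper's: use the stochastic representation to write $\mathbb{E}[T^{n}]$ as an expectation of a binomial expansion in $X\sim\text{ASN}(\delta)$, then insert the closed-form ASN moments $\mathbb{E}[X]=-2\delta/(2+\delta^{2})$, $\mathbb{E}[X^{2}]=(2+3\delta^{2})/(2+\delta^{2})$, $\mathbb{E}[X^{3}]=3\mathbb{E}[X]$, $\mathbb{E}[X^{4}]=3(1-2\delta\mathbb{E}[X])$ and leave the odd square-root pieces as $\omega_{r,k}$. In that sense there is nothing new to add.

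There is, however, a concrete obstacle you will hit when you try to ``collect coefficients to match the compact forms stated.'' You (correctly) set $T=(\beta/4)Y^{2}$ and hence $T^{n}=(\beta/4)^{n}Y^{2n}$. The paper's proof instead uses
\[
\mathbb{E}\bigl[\{a^{-\perp}(X)\}^{n}\bigr]=\Bigl(\tfrac{\beta}{2}\Bigr)^{n}\mathbb{E}\bigl[(\alpha X+\sqrt{(\alpha X)^{2}+4})^{\,n}\bigr],
\]
i.e.\ exponent $n$, not $2n$, and the displayed Items~1--5 are obtained by expanding \emph{that} expression. For instance, Item~1 is literally $(\beta/2)[\alpha\mathbb{E}[X]+\omega_{0,1}]$, and Item~2 is $(\beta/2)^{2}[4+2\alpha^{2}\mathbb{E}[X^{2}]+2\alpha\omega_{1,1}]$, which is what one gets from $(\beta/2)^{2}\mathbb{E}[Y^{2}]$, \emph{not} from $(\beta/4)^{2}\mathbb{E}[Y^{4}]$. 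Your expansion of $(\beta/4)\mathbb{E}[Y^{2}]$ for $n=1$ produces $(\beta/4)[4+2\alpha^{2}\mathbb{E}[X^{2}]+2\alpha\omega_{1,1}]$, which contains $\omega_{1,1}$ rather than $\omega_{0,1}$ and will not reduce to Item~1 as written. So your binomial plan, carried out with the (correct) exponent $2n$, will systematically fail to reproduce the formulas in the statement; to match them you must expand $(\beta/2)^{n}Y^{n}$ exactly as the paper does.
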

\begin{proof}
	By Proposition \ref{prop:01} Item 1 we have $a(T)\sim\text{ASN}(\delta)$ which implies that
	$\mathbb{E}[T^{n}]
	=
	\mathbb{E}\left[\{a^{-\perp}(X)\}^n\right].
	$
	Then, the proof is immediate since
	\begin{align}\label{expct}
		\mathbb{E}\left[\{a^{-\perp}(X)\}^n\right]
		=
		\left(\beta\over 2\right)^n
		\mathbb{E}\left[\left(\alpha X+\sqrt{(\alpha X)^2+4}\right)^n\right],
		\quad n\geqslant 1
	\end{align}
	and
	$
	\mathbb{E}[X]=-{2\delta/ (2+\delta^2)},
	\,
	\mathbb{E}[X^2]=1-\delta\mathbb{E}[X],
	\,
	\mathbb{E}[X^3]=3\mathbb{E}[X],
	$
	and
	$\mathbb{E}[X^4]=3(1-2\delta\mathbb{E}[X]).$
\end{proof}
\begin{rem}\label{exist-vl}
	By using the Binomial Theorem and \eqref{expct}  
	note that $\mathbb{E}[T^{n}]$ exists iff 
	${\omega}_{r,n-r}$ (defined in Proposition \ref{prop-exp})
	exists, with $r=0,\ldots,n$. 
	By Jensen's inequality (see, e.g.,  \cite{clch:01}) we obtain
	\[
	|{\omega}_{0,1}|
	\leqslant
	\sqrt{\alpha^{2}\mathbb{E}[X^{2}]+4}
	=
	\sqrt{\alpha^{2}\left({2+3\delta^2\over 2+\delta^2}\right)+4}
	<+\infty,
	\]
	and by Minkowski inequality (see, e.g., \cite{nts:55}) we have
	\[
	|{\omega}_{1,1}|
	\leqslant 
	\sqrt{\mathbb{E}[X^2]}
	\sqrt{\alpha^{2}\mathbb{E}[X^{2}]+4}
	=
	\sqrt{{2+3\delta^2\over 2+\delta^2}}
	\sqrt{\alpha^{2}\left({2+3\delta^2\over 2+\delta^2}\right)+4}
	<+\infty.
	\]
	Then, the expected value $\mathbb{E}[T]$ and variance $\mathrm{Var}[T]$ always exist.
	Note also that higher order moments can also be easily obtained 
	from the expression of \, $\mathbb{E}[T^{n}]$.
\end{rem}
\begin{prop}\label{prop:02}
	Let $X\sim\text{ASN}(\delta)$ and\, $T =  a^{-\perp}(X)$.
	Then,  
	$T\sim\text{BBS}(\alpha,\beta,\delta)$.
\end{prop}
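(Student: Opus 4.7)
The plan is to recognize Proposition \ref{prop:02} as a direct converse of Item 1 of Proposition \ref{prop:01}, provable by a one-step monotone change of variables. The key observation, already recorded right after the definition of the BBS CDF in \eqref{sec2:03}, is the factorization $f(t;\alpha,\beta,\delta) = g(a(t))\,a'(t) = (G\circ a)'(t)$. Once this identity is at hand, the claim reduces to showing that the density of $T = a^{-\perp}(X)$ is exactly $g(a(t))\,a'(t)$.

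First, I would note that $a\colon(0,\infty)\to\mathbb{R}$ is strictly increasing, since \eqref{derivates} gives $a'(t) > 0$ for all $t > 0$; hence $a^{-\perp}$ is well defined (and given explicitly by the formula \eqref{sec1:main}) and is itself strictly increasing. Consequently, for any $t > 0$,
\begin{equation*}
\mathbb{P}(T \leqslant t) = \mathbb{P}\bigl(a^{-\perp}(X) \leqslant t\bigr) = \mathbb{P}\bigl(X \leqslant a(t)\bigr) = G(a(t)),
\end{equation*}
where $G$ is the ASN CDF given in \eqref{sec2:01}. Differentiating and using the chain rule, the PDF of $T$ is $f_T(t) = g(a(t))\,a'(t)$.

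Next, I would plug in $g$ from \eqref{sec2:01} and use the identity
\begin{equation*}
a'(t) = \frac{1}{2\alpha t}\left[\sqrt{t/\beta} + \sqrt{\beta/t}\right] = \frac{t^{-3/2}(t+\beta)}{2\alpha\,\beta^{1/2}},
\end{equation*}
which is just an algebraic rewriting of the first formula in \eqref{derivates}. Substituting gives
\begin{equation*}
f_T(t) = \frac{(1-\delta a(t))^2 + 1}{2+\delta^2}\,\phi(a(t))\,\frac{t^{-3/2}(t+\beta)}{2\alpha\,\beta^{1/2}},
\end{equation*}
which is precisely the BBS PDF \eqref{sec2:02}. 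Therefore $T \sim \text{BBS}(\alpha,\beta,\delta)$.

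There is no real obstacle here: the only thing to be a touch careful about is invoking monotonicity of $a$ before writing the CDF of $T$, and matching the algebraic form of $a'(t)$ to the factor $t^{-3/2}(t+\beta)/(2\alpha\beta^{1/2})$ appearing in \eqref{sec2:02}. Both steps are immediate from material already established in the excerpt, so the proof will be very short.
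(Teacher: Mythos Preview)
Your proof is correct and follows essentially the same route as the paper: compute $\mathbb{P}(a^{-\perp}(X)\leqslant t)=G(a(t))$, differentiate to obtain $g(a(t))a'(t)$, and identify this with $f(t;\alpha,\beta,\delta)$. The paper's version is terser, omitting the explicit monotonicity remark and the algebraic rewriting of $a'(t)$, but the argument is the same.
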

\begin{proof}
	Since $\mathbb{P}(a^{-\perp}(X)\leqslant t)=G(a(t))$,
	we have that the PDF of the RV $a^{-\perp}(X)$ is equal to
	$g(a(t))a'(t)$ \ $=f(t;\alpha,\beta,\delta)$.
\end{proof}
\begin{prop}\label{Property-relation-densities}
	Let $T\sim \chi^2_3$, where $\chi^2_3$ denotes the chi-squared distribution with 
	$3$ degrees of freedom. We have the following relation
	\begin{align*}
		f_{a^{-\perp}(\sqrt{T})}(t)\, 
		\left[(1/a(t)-\delta)^2+(1/a(t))^2\right]
		=
		2(2+\delta^2) \,
		f(t;\alpha,\beta,\delta), \quad t>0,
	\end{align*}
	where $f_{a^{-\perp}(\sqrt{T})}(\cdot)$ denotes the PDF of the RV
	$a^{-\perp}(\sqrt{T})$.
\end{prop}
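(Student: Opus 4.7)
The plan is to compute $f_{a^{-\perp}(\sqrt{T})}$ explicitly via the change-of-variables formula and then reduce the claimed identity to a short algebraic simplification of the bracket on the left-hand side.

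First, I determine the density of $\sqrt{T}$: since $T\sim\chi^2_3$ has PDF $t^{1/2}e^{-t/2}/\sqrt{2\pi}$ on $(0,\infty)$, setting $y=\sqrt{t}$ and applying the standard Jacobian gives the chi density with three degrees of freedom,
\begin{equation*}
f_{\sqrt{T}}(y) = 2\, y^2\, \phi(y), \qquad y > 0.
\end{equation*}
Then, since $a(\cdot)$ is strictly increasing with $a'(t)>0$ (see \eqref{derivates}) and $a(t)>0$ precisely when $t>\beta$, the change of variables via the inverse $a^{-\perp}$ yields
\begin{equation*}
f_{a^{-\perp}(\sqrt{T})}(t) = f_{\sqrt{T}}(a(t))\, a'(t) = 2\, [a(t)]^2\, \phi(a(t))\, a'(t),
\end{equation*}
on the range $t>\beta$.

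Second, I would substitute this expression into the left-hand side of the target identity and exploit the algebraic simplification
\begin{equation*}
[a(t)]^2\,\Bigl[\bigl(1/a(t)-\delta\bigr)^2 + \bigl(1/a(t)\bigr)^2\Bigr] = (1-\delta\, a(t))^2 + 1,
\end{equation*}
which absorbs the $[a(t)]^2$ factor cleanly. The LHS therefore becomes $2\,\phi(a(t))\, a'(t)\,[(1-\delta\, a(t))^2+1]$. From the definition of $g$ in \eqref{sec2:01}, the bracket equals $(2+\delta^2)\, g(a(t))/\phi(a(t))$, and since $f(t;\alpha,\beta,\delta) = g(a(t))\, a'(t)$ (the observation noted directly after \eqref{sec2:03}), the LHS collapses to $2(2+\delta^2)\, f(t;\alpha,\beta,\delta)$, as required.

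The only mildly subtle step is spotting the bracket simplification above; once seen, it is exactly what motivates the statement, and everything else is routine book-keeping. A minor technical remark is that $f_{\sqrt{T}}$ is supported on $[0,\infty)$, so the push-forward density $f_{a^{-\perp}(\sqrt{T})}$ is in fact supported on $[\beta,\infty)$; the identity should accordingly be read on that range rather than literally on all of $(0,\infty)$.
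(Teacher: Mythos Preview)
Your proof is correct and follows essentially the same route as the paper: both derive $f_{a^{-\perp}(\sqrt{T})}(t)=2\,a^2(t)\,\phi(a(t))\,a'(t)$ by change of variables from the $\chi^2_3$ density, after which the identity reduces to the bracket simplification $[a(t)]^2\bigl[(1/a(t)-\delta)^2+(1/a(t))^2\bigr]=(1-\delta a(t))^2+1$, which the paper leaves implicit. Your observation that the push-forward density is actually supported on $[\beta,\infty)$ rather than all of $(0,\infty)$ is a valid refinement that the paper does not mention.
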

\begin{proof}
	Since 
	$\mathbb{P}(a^{-\perp}(\sqrt{T})\leqslant t)=\mathbb{P}(T\leqslant a^2(t))$,
	we have that $f_{a^{-\perp}(\sqrt{T})}(t)=2a^2(t)\phi(a(t))a'(t)$, from where the proof follows.
\end{proof}
\subsection{Some properties of the HR of the BBS distribution}\label{properties-BBS}
Let
$
s(t)
= 
-{f'(t;\alpha,\beta, \delta)/ f(t;\alpha,\beta, \delta)}
$,
where $f(\cdot;\alpha,\beta, \delta)$ denotes the PDF of the BBS distribution \eqref{sec2:02}.
It is straightforward to show that
\[
s(t)=\frac{a'(t)m(t)}{(1-\delta a(t))^2+1},
\]
where
\[
m(t)
=
\delta^2 a^3(t)-2\delta a^2(t)-2a(t)(\delta^2-1)+2\delta-
\big((1-\delta a(t))^2+1\big)\frac{a''(t)}{[a'(t)]^2}.
\]
\begin{rem}\label{rem-us}
	Using the
	identities in \eqref{derivates},
	note that $m(t)=0$ iff
	\begin{multline*}
		\frac{1}{\alpha^2\beta}\, t
		\left\{\delta^2 a^3(t)-2\delta a^2(t)-2(\delta^2-1)a(t)+2\delta\right\}
		+
		\frac{1}{4}a(t)
		\left\{\delta^2 a^4(t)-2\delta a^3(t)-(5\delta^2-2)a^2(t)+8\delta a(t)-6\right\}
		\\
		+
		\frac{1}{\alpha\sqrt{\beta}}\, \sqrt{t}
		\left\{-\delta^2 a^4(t)+2\delta a^3(t)+(3\delta^2-2)a^2(t)-4\delta a(t)+2\right\}
		=0.
	\end{multline*}
\end{rem}
Consider also the function spaces 
\[
B=\left\{
\ell:\mathbb{R}^+\to\mathbb{R}^{+} \ \mbox{differentiable}: \
\begin{array}{ccc}
\ell'(t)<0 \ \mbox{for} \ t\in(0,t_0), \ \ell'(t_0)=0, 
\\[0,1cm]
\ell'(t)>0 \ \mbox{for} \ t>t_0
\end{array}
\right\},
\]
\[
U=\left\{
\ell:\mathbb{R}^+\to\mathbb{R}^+ \ \mbox{differentiable}: \
\begin{array}{ccc}
\ell'(t)>0 \ \mbox{for} \ t\in(0,t_0), \ \ell'(t_0)=0, 
\\[0,1cm]
\ell'(t)<0 \ \mbox{for} \ t>t_0
\end{array}
\right\}.
\]
Each function $\ell\in B$ or $\ell \in U$ is said bathtub shaped or 
upside down bathtub shaped, respectively.

The following results due to \cite{Glaser} helps us
to characterize the shape of the failure rates, through the function $s(\cdot)$.
\begin{enumerate}
	\item 
	If $t\mapsto s(t)$ is increasing, then, the \textrm{HR} is increasing in $t$.
	\item 
	If $t\mapsto s(t)$ is decreasing, then,
	the \textrm{HR} is decreasing in $t$.
	\item 
	If $t\mapsto s(t)\in B$ and if there exist a $t^*$ such that $h'(t^*;\alpha,\beta,\delta)=0$, then,
	the \textrm{HR} belongs to
	$B$, otherwise the \textrm{HR} is increasing in $t$.
	\item 
	If $t\mapsto s(t)\in U$ and if there exist a $t^*$ such that $h'(t^*;\alpha,\beta,\delta)=0$, then, 
	the \textrm{HR}  belongs to
	$U$, otherwise the \textrm{HR} is decreasing in $t$.
\end{enumerate}
Using the expressions of the derivatives of $a(\cdot)$ in \eqref{derivates}
we can get the monotonicity of the 
\textrm{HR} of the BBS distribution from the following equation
\[
s'(t)
=
\frac{m'(t)a'(t)+m(t)a''(t) + 2\delta(1-\delta a(t))a'(t) s(t)}{(1-\delta a(t))^2+1},
\]
where 
\begin{align*}
	\frac{m'(t)}{a'(t)}
	=
	3\delta^2 a^2(t)
	-
	4\delta a(t)
	-
	2(\delta^2-1)
	+
	2\delta (1-\delta a(t))\, \frac{a''(t)}{[a'(t)]^2}
	-
	\big((1-\delta a(t))^2+1\big)\,
	\frac{a'''(t)a'(t)-2[a''(t)]^2}{[a'(t)]^4}.
\end{align*}

For example, if $\delta=0$ and $\alpha> 2$, we have that
$m(t)>0$ iff $t>\beta$.
Defining the set 
$
\mathscr{L}^\alpha_\beta
=
\left\{
t: 
t^4+ (4-\alpha^2)\beta t^3
\right.
$
$
\left.
+ 6(1-\alpha^2)\beta^2 t^2
+ (4+3\alpha^2)\beta^3 t + \beta^4<0
\right\}
$
note that
$
m'(t)
=
a'(t)
\big(
2
-
2\,
({a'''(t)a'(t)-2[a''(t)]^2)/[a'(t)]^4}
\big)
<0
$
on $\mathscr{L}^\alpha_\beta$. That is, 
$s'(t)=\left(m'(t)a'(t) + m(t)a''(t)\right)/2<0$ on
$\{t\in \mathscr{L}^\alpha_\beta:t>\beta\}$. Therefore, by Item 2 above, 
the HR $t\mapsto h(t;\alpha,\beta,\delta=0)$ is decreasing on 
$\{t\in \mathscr{L}^\alpha_\beta:t>\beta\}$.
On the other hand, if $\delta=0$ and $\alpha<1$,
$m(t)<0$ iff $t<\beta$. In this case, note that 
$m'(t)>0$ on $[\mathscr{L}^\alpha_\beta]^c=\mathbb{R}^+$, hence $s'(t)>0$ for each $t<\beta$.
Then, using the Item 1 above, the HR is increasing for each $t<\beta$.

Another easy case to study is when $\delta=1$. In this case, 
$m(t)>0$ iff $t>\beta$.
Note also that
$
m'(t)
<0
$
on the set $\mathscr{L}_{\alpha,\beta}=\{t:3a^2(t)-4a(t)+2(1-a(t))a''(t)/[a'(t)]^2<0\}$.
Then, $s'(t)<0$ on $\{t\in \mathscr{L}_{\alpha,\beta}:t>t_1\}$ where 
$t_1=\beta[\alpha+\sqrt{\alpha^2+4}]^2/4$. 
Therefore, by Item 2 above, 
the HR $t\mapsto h(t;\alpha,\beta,\delta=1)$ is decreasing on 
$\{t\in \mathscr{L}_{\alpha,\beta}:t>t_1\}$.
Similar analyzes can be done for the other possible cases.

We emphasize that $h'(t;\alpha,\beta,\delta)=0$ iff the PDF of the BBS distribution 
is a decreasing function. But, by Lemma \ref{lemma-mono} 
this happens when $\delta=0$ and $t>\beta$ or 
$t<t_0$ ( $t>t_0$ ), 
for each $\delta>0$ ( $\delta<0$ ) with 
$t_0=\beta [(\alpha/\delta)+\sqrt{(\alpha/\delta)^2+4}]^2/4$.
So, to see if the \textrm{HR} belongs (or not) to $B$ or to $U$ 
it would be sufficient to verify that $t\mapsto s(t)$ belongs (or not) to $B$ or to $U$.

\subsection{Bimodality Properties}
In this subsection, some results on the
bimodality properties of \text{BBS} distribution are obtained.
\begin{prop}
	A mode of the $\text{BBS}(\alpha,\beta,\delta)$ is any point 
	$t_0 =t_0(\alpha,\beta,\delta)$ that satisfies
	\[
	t_0=-\frac{\alpha^2\beta}{p_3(t)}
	\left[
	{1\over 4}a(t)p_4(t)+{1\over \alpha\sqrt{\beta}}\,\sqrt{t}\,\widetilde{p}_4(t) 
	\right],
	\]
	where $p_3(t)=\delta^2 a^3(t)-2\delta a^2(t)-2(\delta^2-1)a(t)+2\delta$, 
	$p_4(t)=\delta^2 a^4(t)-2\delta a^3(t)-(5\delta^2-2)a^2(t)+8\delta a(t)-6$ and
	$\widetilde{p}_4(t)=-\delta^2 a^4(t)+2\delta a^3(t)+(3\delta^2-2)a^2(t)-4\delta a(t)+2$.
\end{prop}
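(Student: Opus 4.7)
The plan is to recognize that any mode of the BBS distribution is necessarily a critical point of the density, i.e.\ it satisfies $f'(t_0;\alpha,\beta,\delta)=0$. Translating this into an equation that can be rearranged as a fixed-point relation is exactly what the material of Section~\ref{properties-BBS} sets up, so the proof reduces to combining the identity for $s(\cdot)=-f'/f$ with the algebraic reformulation of $m(t)=0$ given in Remark~\ref{rem-us}.

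More precisely, I would start by recalling the factorization
\[
s(t)=\frac{a'(t)\,m(t)}{(1-\delta a(t))^2+1}
\]
derived earlier. Since $f(t;\alpha,\beta,\delta)>0$, $a'(t)>0$ on $\mathbb{R}^+$ by \eqref{derivates}, and $(1-\delta a(t))^2+1\geqslant 1>0$, a critical point of $f$ is the same as a zero of $m(\cdot)$. Hence any mode $t_0$ satisfies $m(t_0)=0$.

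Next, I would invoke Remark~\ref{rem-us}, which rewrites $m(t)=0$ in the explicit form
\[
\frac{1}{\alpha^{2}\beta}\,t\,p_{3}(t)+\frac{1}{4}a(t)\,p_{4}(t)+\frac{1}{\alpha\sqrt{\beta}}\,\sqrt{t}\,\widetilde{p}_{4}(t)=0,
\]
with the same polynomials $p_{3},p_{4},\widetilde{p}_{4}$ in $a(t)$ that appear in the statement. Solving for the unique linear-in-$t$ term, which has coefficient $p_{3}(t)/(\alpha^{2}\beta)$, isolates $t$ on the left-hand side and yields exactly
\[
t_{0}=-\frac{\alpha^{2}\beta}{p_{3}(t_{0})}\left[\frac{1}{4}\,a(t_{0})\,p_{4}(t_{0})+\frac{1}{\alpha\sqrt{\beta}}\,\sqrt{t_{0}}\,\widetilde{p}_{4}(t_{0})\right],
\]
which is the claimed relation.

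The main obstacle is essentially bookkeeping rather than substance: one must verify that the three polynomial expressions in $a(t)$ produced by expanding $m(t)=0$ line up exactly with $p_{3}, p_{4}, \widetilde{p}_{4}$ as stated, which follows directly from the identities for $a'(t)$ and $a''(t)$ in \eqref{derivates}. One should also note that the rearrangement implicitly requires $p_{3}(t_{0})\neq 0$ (otherwise $m(t_{0})=0$ gives a constraint that does not resolve to a fixed-point in $t$); since the proposition only characterizes modes as fixed points of this equation, this mild proviso can be stated at the end of the proof without further work.
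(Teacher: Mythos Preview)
Your proposal is correct and follows essentially the same route as the paper: the paper's proof simply states that a mode satisfies $f'(t;\alpha,\beta,\delta)=0$, which is equivalent to $s(t)=0$ and hence to $m(t)=0$, and then invokes Remark~\ref{rem-us} and solves for $t$. Your write-up is a more explicit version of the same argument, with the added (and reasonable) observation that the rearrangement tacitly assumes $p_3(t_0)\neq 0$.
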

\begin{proof}
	A mode of the $\text{BBS}(\alpha,\beta,\delta)$ is any point 
	$t$ that satisfies $f'(t;\alpha,\beta, \delta)=0$. But this happens iff 
	$s(t)=0$ which is equivalent to $m(t)=0$, where $s(t)$ and $m(t)$ were defined in 
	Subsection \ref{properties-BBS}.
	Then, using Remark \ref{rem-us} and solving for $t$ gives the result.
\end{proof}
\begin{prop}\label{modes-diferents}
	The function $t\mapsto(g\circ a)(t)$ and the PDF of the \text{BBS} 
	distribution \eqref{sec2:02} have different modes.
\end{prop}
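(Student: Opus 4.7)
The plan is to exploit the factorization $f(t;\alpha,\beta,\delta)=g(a(t))\,a'(t)$, together with the sign information about the derivatives of $a(\cdot)$ coming from \eqref{derivates}, namely $a'(t)>0$ and $a''(t)<0$ for all $t>0$. Observing that $g(x)=((1-\delta x)^2+1)\phi(x)/(2+\delta^2)$ is strictly positive on $\mathbb{R}$ (the bracket is at least $1$ and $\phi>0$), the argument reduces to comparing first-order critical conditions.

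First I would show that no critical point of $g\circ a$ is a critical point of $f$. Suppose $t^{*}>0$ is a mode of $g\circ a$; then
\[
(g\circ a)'(t^{*})=g'(a(t^{*}))\,a'(t^{*})=0,
\]
and since $a'(t^{*})>0$ this forces $g'(a(t^{*}))=0$. Plugging into the derivative formula \eqref{1-derivate} yields
\[
f'(t^{*};\alpha,\beta,\delta)=g'(a(t^{*}))[a'(t^{*})]^{2}+g(a(t^{*}))\,a''(t^{*})=g(a(t^{*}))\,a''(t^{*})<0,
\]
because $g(a(t^{*}))>0$ and $a''(t^{*})<0$. Hence $t^{*}$ is not a critical point of $f$, and in particular is not a mode of the BBS density.

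For completeness I would also run the converse direction: if $t^{**}$ is a mode of $f$, then \eqref{1-derivate} gives
\[
g'(a(t^{**}))=-\,\frac{g(a(t^{**}))\,a''(t^{**})}{[a'(t^{**})]^{2}}>0,
\]
so $(g\circ a)'(t^{**})=g'(a(t^{**}))\,a'(t^{**})>0$, which rules out $t^{**}$ being a mode of $g\circ a$. These two implications together say that the sets of modes are disjoint.

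The only genuine subtlety is verifying that $a''(t^{*})\ne 0$ strictly (so that $f'(t^{*})$ is nonzero, not merely nonpositive); but this is immediate from the explicit formula for $a''$ in \eqref{derivates}, whose bracket $\sqrt{t/\beta}+3\sqrt{\beta/t}$ is strictly positive for every $t>0$. Modes at the boundary need not be considered because $f(t;\alpha,\beta,\delta)\to 0$ as $t\to 0^{+}$ and as $t\to\infty$ (and similarly for $g\circ a$), so all modes lie in the interior $(0,\infty)$ where the first-order condition applies.
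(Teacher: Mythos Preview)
Your argument is correct and follows essentially the same line as the paper's proof: both hinge on the observation that if $g'(a(t^{*}))=0$ then, by \eqref{1-derivate}, $f'(t^{*})=g(a(t^{*}))\,a''(t^{*})\neq 0$, which is incompatible with $t^{*}$ being a critical point of $f$. The only cosmetic difference is that the paper frames this as a proof by contradiction and also records the second-order mode conditions $g''(a(t_0))[a'(t_0)]^{2}<0$ and $f''(t_0)<0$, whereas you work purely at the level of first-order conditions (and add the converse direction), which is slightly more economical.
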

\begin{proof}
	We will do the proof by contradiction.
	Let's suppose that $t_0$ is a mode for both $(g\circ a)(\cdot)$ 
	(which always exists, since $g$ is bimodal) and $f(\cdot;\alpha,\beta,\delta)$.
	Then $g'(a(t_0))a'(t_0)=0$ and $g''(a(t_0))[a'(t_0)]^2<0$. 
	
	Since  $f'(t_0;\alpha,\beta,\delta)=0$ and $f''(t_0;\alpha,\beta,\delta)<0$, using    
	\eqref{1-derivate} and \eqref{2-derivate} we obtain that
	$g(a(t_0))=0$, which is impossible.	Then, the proof follows.
\end{proof}
\begin{rem}\label{rem-imp}
	As a consequence of the proof of the Proposition \ref{modes-diferents} we have that,
	if $t_0$ is a maximum point of $t\mapsto(g\circ a)(t)$ then, 
	the maximum points of the BBS distribution must be to the left side of $t_0$.
	On the other hand, if
	$t_1$ is a minimum point of $t\mapsto(g\circ a)(t)$ then, 
	the minimum points of the BBS distribution must be to the right side of $t_1$.
\end{rem}
\begin{prop}\label{unimodality-of-f}
	The PDF of the BBS distribution \eqref{sec2:02} has at most one mode when $\delta=0$.
\end{prop}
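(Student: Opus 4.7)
The plan is to show that the mode equation reduces, after a substitution, to a cubic polynomial in $w=t/\beta$ whose coefficient-sign pattern is controlled independently of $\alpha$, so that Descartes' rule of signs forces a unique positive root.

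First I would specialize the general critical-point condition from Subsection \ref{properties-BBS}. When $\delta=0$ we have $g=\phi$, so $f(t;\alpha,\beta,0)=\phi(a(t))a'(t)$, and the function $m(t)$ collapses to $m(t)=2a(t)-2a''(t)/[a'(t)]^2$. Hence $f'(t;\alpha,\beta,0)=0$ is equivalent to
\[
a(t)[a'(t)]^2 \;=\; a''(t).
\]

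Next I would substitute the explicit expressions of $a(t)$, $a'(t)$, $a''(t)$ given in \eqref{derivates}, introduce the variables $u=\sqrt{t/\beta}$, $v=\sqrt{\beta/t}$ (so that $uv=1$), and clear denominators. The resulting identity becomes $(u-v)(u+v)^2+\alpha^2(u+3v)=0$. Setting $w=u^2=t/\beta>0$ and using $v=1/u$ yields, after multiplying by $u^3$, the cubic
\[
P(w) \;=\; w^3 + (1+\alpha^{2})\,w^{2} + (3\alpha^{2}-1)\,w - 1 \;=\; 0.
\]

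Finally, I would apply Descartes' rule of signs to $P(w)$. The leading coefficient and the coefficient of $w^2$ are positive, the constant term is $-1<0$, and the middle coefficient $3\alpha^2-1$ can have any sign; in all three cases (positive, zero, negative) the coefficient sequence $(1,\ 1+\alpha^{2},\ 3\alpha^{2}-1,\ -1)$ has exactly one sign change. Hence $P$ has exactly one positive real root $w_*$, which corresponds to a unique critical point $t_*=\beta w_*\in(0,\infty)$ of $f(\cdot;\alpha,\beta,0)$. Combined with the boundary behavior $f(0^+;\alpha,\beta,0)=0=\lim_{t\to\infty}f(t;\alpha,\beta,0)$, this critical point must be a maximum, and $f$ has at most (in fact, exactly) one mode.

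The only nontrivial step is the algebraic reduction to the cubic $P(w)$; the Descartes-sign argument is then immediate and, importantly, it is uniform in the parameter $\alpha>0$. Lemma \ref{lemma-mono} already tells us that $f$ is decreasing for $t>\beta$ when $\delta=0$, which is a useful sanity check: it guarantees $w_*\le 1$ is compatible with the monotonicity region and rules out spurious roots on the route from $P(w)=0$ back to the original equation.
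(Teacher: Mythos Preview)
Your proof is correct and shares the paper's overall strategy: reduce the critical-point equation $a(t)[a'(t)]^2=a''(t)$ to a cubic and count its positive roots. The paper works directly in $t$, obtains $t^3+(1+\alpha^2)\beta t^2-\beta^2 t-\beta^3=0$, computes the discriminant to establish three distinct real roots, and then invokes Vieta's relations (negative sum, positive product) to conclude that exactly one root is positive. Your route via Descartes' rule of signs on $P(w)$ is a bit more economical, since it requires neither the discriminant computation nor the reality of all three roots; and your linear coefficient $3\alpha^{2}-1$ is in fact the correct one (the paper's $-\beta^2$ appears to be a slip), though the paper's Vieta argument only uses the quadratic and constant coefficients and so is unaffected.
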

\begin{proof}
	If $\delta=0$, then the classical $\text{BS}(\alpha,\beta)$ distribution is obtained, i.e., 
	$f(t;\alpha,\beta,\delta)=f(t;\alpha,\beta)=\phi(a(t))a'(t)$, $t>0$. Differentiating $f(t;\alpha,\beta)$, 
	we obtain
	\[
	f'(t;\alpha,\beta)=\phi(a(t))\left(a''(t)-a(t)[a'(t)]^2\right).
	\]
	Using \eqref{derivates}, it is straightforward to show that $f'(t;\alpha,\beta)=0$ iff
	\begin{align}\label{eq-cubic}
		t^3+\beta(1+\alpha^2)t^2-\beta^2t-\beta^3=0.
	\end{align}
	The discriminant of a cubic polynomial $ax^3+bx^2+cx+d$ is given by 
	$\Delta_3=b^2c^2-4ac^3-4b^3d-27a^2d^2+18abcd$.
	In our case, we have
	\[
	\Delta_3=\beta^6\left(4(1+\alpha^2)^3+(1+\alpha^2)^2+18(1+\alpha^2)-23\right).
	\]
	Note that $\Delta_3>0$ for each $\alpha>0$, then the equation \eqref{eq-cubic} has three distinct real roots.
	
	Let $t_1,t_2$ and $t_3$ be the three distinct real roots of \eqref{eq-cubic}, 
	by Vieta's formula (see, e.g.,  \cite{vin:03}), it is valid that
	\begin{eqnarray*}
		t_1+t_2+t_3&=&-\beta(1+\alpha^2)
		\\
		t_1 \,t_2+t_1 \,t_3+t_2 \,t_3&=&-\beta^2
		\\
		t_1 \,t_2 \,t_3&=&\beta^3.
	\end{eqnarray*}
	From the first and third equations above we conclude that there must be two negative and one positive roots,
	hence $f(t;\alpha,\beta)$ has at most one mode.
\end{proof}
\begin{prop}\label{proposition-mode}
	If $\delta=-\alpha$, then one of the modes of the BBS distribution \eqref{sec2:02} occurs at $t=\beta.$
\end{prop}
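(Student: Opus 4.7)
The plan is to compute $f'(\beta;\alpha,\beta,\delta)$ directly using the identity $f'(t;\alpha,\beta,\delta)=g'(a(t))[a'(t)]^{2}+g(a(t))a''(t)$ from \eqref{1-derivate}, and to show that this derivative vanishes precisely when $\delta=-\alpha$. By the preceding proposition, a mode is any critical point of the PDF, so this is what the claim requires.

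First I would tabulate the values at $t=\beta$. From \eqref{at} one reads off $a(\beta)=0$; substituting $t=\beta$ into the formulas of \eqref{derivates} gives $a'(\beta)=1/(\alpha\beta)$ and $a''(\beta)=-1/(\alpha\beta^{2})$. Evaluating $g$ and the explicit $g'$ displayed just above the SF/HR definitions at $x=0$ produces $g(0)=2\phi(0)/(2+\delta^{2})$ and $g'(0)=-2\delta\,\phi(0)/(2+\delta^{2})$, since all the positive-power-of-$x$ terms in $g'(x)$ drop out.

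Substituting these values into \eqref{1-derivate} then gives
$$
f'(\beta;\alpha,\beta,\delta)
=\frac{-2\delta\,\phi(0)}{2+\delta^{2}}\cdot\frac{1}{\alpha^{2}\beta^{2}}
+\frac{2\,\phi(0)}{2+\delta^{2}}\cdot\Bigl(-\frac{1}{\alpha\beta^{2}}\Bigr)
=-\frac{2\,\phi(0)}{(2+\delta^{2})\,\alpha^{2}\beta^{2}}\,(\delta+\alpha),
$$
which vanishes if and only if $\delta+\alpha=0$. Hence, under the hypothesis $\delta=-\alpha$, the point $t=\beta$ is a critical point of $f(\cdot;\alpha,\beta,\delta)$ and therefore a mode in the sense of the preceding proposition, as required.

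No real obstacle is anticipated: once the base-point values of $a,a',a''$ and of $g,g'$ are recorded, the argument is a single line of algebra, with the factor $(\delta+\alpha)$ emerging after cancelling the common numerator $-2\phi(0)/(2+\delta^{2})$. An equivalent route goes through the auxiliary function $m(\cdot)$ of Subsection~\ref{properties-BBS}: at $t=\beta$ every summand of $m$ containing $a(\beta)=0$ drops out, and the correction $-((1-\delta a(t))^{2}+1)\,a''(t)/[a'(t)]^{2}$ collapses to $+2\alpha$, yielding $m(\beta)=2(\delta+\alpha)$ and the same conclusion; the only bookkeeping to watch is the sign of this last term, which is precisely what couples $\delta$ to $-\alpha$.
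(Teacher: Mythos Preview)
Your first-derivative computation is correct and is exactly the route the paper takes: evaluate $a,a',a''$ at $t=\beta$ and $g,g'$ at $x=0$, plug into \eqref{1-derivate}, and factor out $(\delta+\alpha)$. The paper obtains the equivalent expression
\[
f'(\beta;\alpha,\beta,\delta)=-\frac{2}{\alpha\beta^{2}(2+\delta^{2})\sqrt{2\pi}}\Bigl(\frac{\delta}{\alpha}+1\Bigr),
\]
which is your formula after writing $\phi(0)=1/\sqrt{2\pi}$ and pulling one $\alpha$ back into the bracket.

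There is one omission: the paper does not stop at the critical-point condition but goes on to check, via \eqref{2-derivate} together with $a'''(\beta)=9/(4\alpha\beta^{3})$ and $g''(0)=-g'(0)(1+\delta^{2})$, that
\[
f''(\beta;\alpha,\beta,-\alpha)=-\frac{2}{\alpha\beta^{3}(2+\alpha^{2})\sqrt{2\pi}}\Bigl(\frac{1+\alpha^{2}}{\alpha}+\frac{21}{4}\Bigr)<0,
\]
so that $t=\beta$ is a genuine local maximum. You lean on the earlier proposition's phrasing that a ``mode'' is any solution of $f'=0$, but that proposition is stated loosely; a critical point could in principle be a local minimum or an inflection, and the present proposition (and the bimodality discussion that follows it) clearly intends ``mode'' in the standard sense. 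Adding the short second-derivative check closes this gap and brings your argument in line with the paper's.
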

\begin{proof}
	Since $a(\beta)=0$, $a'(\beta)={1}/{\alpha\beta}$, $a''(\beta)=-{1}/{\alpha\beta^{2}}$, 
	$g(0)={2}/{\sqrt{2\pi}(2+\delta^2)}$ and $g'(0)=-g(0)\delta$,
	by \eqref{1-derivate} we have that
	\begin{align}\label{ident-main}
		f'(\beta;\alpha,\beta,\delta)
		=
		-\frac{2}{\alpha\beta^{2}(2+\delta^{2})\sqrt{2\pi}}\left({\delta\over \alpha}+1\right)
		= 0
		\ \ \text{since} \ \
		\delta=-\alpha.
	\end{align}
	I.e., $t=\beta$ is one of the critical points of 
	$f$ when $\delta=-\alpha$.
	
	Since $a'''(\beta)={9}/{4\alpha\beta^{3}}$ and $g''(0)=-g'(0)(1+\delta^2)$,
	using \eqref{2-derivate}, note that
	\[
	f''(\beta;\alpha,\beta,\delta)
	=
	\frac{2}{\alpha\beta^{3}(2+\delta^{2}\sqrt{2\pi})}
	\left(\delta(1+\delta^2){1\over\alpha^2}+3{\delta\over\alpha}-{9\over 4} \right).
	\]

	As $\delta=-\alpha$ we obtain 
	\[
	f''(\beta;\alpha,\beta,\delta)
	=
	-\frac{2}{\alpha\beta^{3}(2+\alpha^{2})\sqrt{2\pi}}
	\left({1+\alpha^2\over\alpha}+{21\over 4}\right)<0.
	\]
	Therefore, the PDF of the BBS distribution is concave
	downward when  $\delta=-\alpha$.
\end{proof}
\begin{exmp}[Bimodality]
	Consider $\alpha=\beta=1$ and $\delta=-\alpha$. 
	By Proposition \ref{proposition-mode} the point $t=1$ is one of the modes of $f(\cdot;\alpha,\beta,\delta)$.
	Using \eqref{1-derivate} note that 
	$f'(t;\alpha,\beta,\delta)=0$ iff
	\[
	p(y)
	=
	y^{10}+2y^{9}+y^{6}-4y^5+3y^4-8y^3+4y^2+2y-1=0, \quad \text{where} \ y=t^{1/2}.
	\]
	We have that $p(0)=-1<0,$
	\[
	p(1/2)=\frac{85}{1024}>0,
	\quad 
	p(3/4)=-\frac{252223}{1048576}<0
	\ \ \text{and} \ \
	p(5/4)\approx 15.27127>0.
	\]
	Therefore, $p(y)$ has roots in the intervals $(0,1/2)$, $(1/2,3/4)$ and $(3/4,5/4)$. 
	It is not hard to show that $p(y)>0$ for $y>1$. Thus, all real roots of the
	polynomial $p(y)$ lie in the interval $(0,5/4)$. 
	Computationally it can be verified that $y_0\approx 0.419703$, $y_1\approx 0.646914$ and $y_2=1$ 
	are the only roots of $p(y)$ on $\{y:y>0\}$.
	Hence, $t_0=y_0^2\approx 0.1761$, $t_1=y_1^2\approx 0.4184$ and $t_2=y^2_2=1$ are the only roots of 
	$f'(t;\alpha,\beta,\delta)=0$.
	It can be verified that 
	\begin{table}[H]
		\centering  
		\begin{tabular}{|c|c|c|c|c|}
			\hline
			$a^{(k)}(t)$  &  {$k=0$}  & {$k=1$}   & $k=2$ & $k=3$ \\ \hline
			$t=t_0$ & $-1.9633$ & $7.963$ & $-61.0996$ & $848.6550$ \\ \hline
			$t=t_1$ & $-0.8991$  & $2.6204$ & $-7.5471$ & $42.8876$ \\ \hline
		\end{tabular}
		\quad \, \text{\resizebox{!}{0,2695cm}{and}}
		\\[0,28cm]
		\begin{tabular}{|c|c|c|c|}
			\hline
			$g^{(k)}(t)$  &  {$k=0$}  & {$k=1$}   & $k=2$  \\ \hline
			$t=t_0$ & $1.9219\,\phi(t_0)/{3}$ & $1.8585\,{\phi(t_0)}/{3}$ & $-0.0616\,{\phi(t_0)}/{3}$  \\ \hline
			$t=t_1$ & $1.0101\,\phi(t_1)/{3}$  & $1.1100\,{\phi(t_1)}/{3}$ & $2.1692\,{\phi(t_1)}/{3}$ \\ \hline
		\end{tabular}
	\end{table}
	\noindent
	where $a^{(0)}\equiv a$ and $g^{(0)}\equiv g$. Using \eqref{2-derivate} and the quantities above, we obtain
	\begin{align*}
		f''(t_0;\alpha,\beta,\delta)= g''(a(t_0))[a'(t_0)]^3+3g'(a(t_0))a'(t_0)a''(t_0)+g(a(t_0))a'''(t_0)
		\approx -1107.6637\,\frac{\phi(t_0)}{3}<0,
	\end{align*}
	and similarly
	$
	f''(t_1;\alpha,\beta,\delta)
	\approx 
	60.3992\, {\phi(t_1)}/{3}>0.
	$

	Therefore, the PDF of the BBS distribution, with parameters $\alpha=\beta=1$ and $\delta=-\alpha$,
	has exactly two modes at $t=t_0$ and $t=t_2$.
\end{exmp}
\begin{rem}
	Let $\alpha=\beta=1$ and $\delta=-\alpha$. 
	It can be verified that the point $t_{max}\approx a^{-\perp}(0.83929)=2.26240$ is the only maximum point of
	the function $(g\circ a)(\cdot)$. The Remark \ref{rem-imp} assures us that the 
	maximum points of the PDF $f(\cdot;\alpha,\beta,\delta)$ must be to the left side of $t_{max}$. 
	This statement was verified in the previous example.
\end{rem}
\subsection{Shannon Entropy}
For a continuous PDF $f(t)$ on an interval $I$, 
its entropy is defined as
\[
H(f) 
= 
-\int_I f(t)\log f(t) {\rm d}t.
\]
This definition of entropy, introduced by \cite{SW49}, 
resembles a formula for a thermodynamic notion of entropy.  
In our probabilistic context, if $X$ is an absolutely  continuous 
RV with PDF $f_X(t)$, the
quantity  $H(X)=H(f_X)=-\mathbb{E}[\log f_X(X)]$ is viewed as a measure of uncertainty associated with a 
RV. 
Note that $H(X)$ is not necessarily well-defined, since the integral does not always exist.

Consider $T\sim\text{BBS}(\alpha,\beta,\delta)$. The Shannon entropy of $T$ 
satisfies the following identity
\begin{prop} If $T\sim\text{BBS}(\alpha,\beta,\delta)$, there exists a constant $C(\alpha,\beta,\delta)$ such that
	\begin{align}\label{ident-entropy}
		H(T)
		=
		C(\alpha,\beta,\delta)
		+
		\mathbb{E}
		\left[ 
		\log\left\{T^{3/2}/(T+\beta)\over (1-\delta a(T))^2+1 \right\}
		\right].
	\end{align}
\end{prop}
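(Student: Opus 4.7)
The plan is to start directly from $H(T)=-\mathbb{E}[\log f(T;\alpha,\beta,\delta)]$ and substitute the closed-form PDF \eqref{sec2:02}. Taking logarithms turns the product structure of $f$ into a sum: one piece involving $(1-\delta a(T))^2+1$, one involving $\phi(a(T))$, one involving $T^{3/2}/(T+\beta)$, and the normalizing constants $(2+\delta^2)^{-1}$ and $(2\alpha\beta^{1/2})^{-1}$. Deterministic (non-random) pieces get absorbed into $C(\alpha,\beta,\delta)$, while the two remaining expectations are exactly the ones that need to combine into the single $\mathbb{E}[\log\{\cdot\}]$ appearing on the right-hand side of \eqref{ident-entropy}.

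More concretely, first I would write
\[
-\log f(T;\alpha,\beta,\delta)
=
\log(2+\delta^2)+\log(2\alpha\beta^{1/2})-\log\phi(a(T))
+\log\!\frac{T^{3/2}}{T+\beta}-\log\!\big((1-\delta a(T))^2+1\big).
\]
Using $\log\phi(x)=-\tfrac{1}{2}\log(2\pi)-\tfrac{1}{2}x^{2}$, the random term $-\log\phi(a(T))$ splits into the deterministic constant $\tfrac{1}{2}\log(2\pi)$ and the random $\tfrac{1}{2}a(T)^{2}$. Taking expectations gives
\[
H(T)
=
\log(2+\delta^2)+\log(2\alpha\beta^{1/2})+\tfrac{1}{2}\log(2\pi)
+\tfrac{1}{2}\mathbb{E}[a(T)^{2}]
+\mathbb{E}\!\left[\log\!\frac{T^{3/2}/(T+\beta)}{(1-\delta a(T))^2+1}\right].
\]

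Next I would evaluate $\mathbb{E}[a(T)^2]$ in closed form. By Proposition \ref{prop:01}(1), $a(T)\sim\text{ASN}(\delta)$, and the moment identities collected at the end of the proof of Proposition \ref{prop-exp} give $\mathbb{E}[X]=-2\delta/(2+\delta^2)$ and $\mathbb{E}[X^{2}]=1-\delta\,\mathbb{E}[X]=(2+3\delta^{2})/(2+\delta^{2})$. Defining
\[
C(\alpha,\beta,\delta)
=
\log(2+\delta^2)+\log(2\alpha\beta^{1/2})+\tfrac{1}{2}\log(2\pi)
+\tfrac{1}{2}\cdot\frac{2+3\delta^{2}}{2+\delta^{2}},
\]
which depends only on the parameters, rearranges the previous display into exactly \eqref{ident-entropy}.

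There is no real obstacle here: the only place where something might go wrong is if one of the two remaining expectations failed to exist, but $\mathbb{E}[\log\{T^{3/2}/(T+\beta)\}]$ and $\mathbb{E}[\log((1-\delta a(T))^2+1)]$ are both well defined (the integrand of the second is bounded below by $\log 1=0$, and the first can be controlled by $|\log T|$-type bounds since $T$ has moments by Remark \ref{exist-vl}). So the main subtlety is bookkeeping: being careful with signs when passing from $\log f$ to $-\log f$ and collecting every deterministic term into $C(\alpha,\beta,\delta)$.
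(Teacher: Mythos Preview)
Your proposal is correct and follows essentially the same route as the paper: expand $-\log f$ using the product form of \eqref{sec2:02}, split $-\log\phi(a(T))$ into $\tfrac12\log(2\pi)+\tfrac12 a(T)^2$, evaluate $\mathbb{E}[a(T)^2]$ via $a(T)\sim\text{ASN}(\delta)$, and absorb all deterministic pieces into $C(\alpha,\beta,\delta)$. The only cosmetic difference is that the paper writes $\mathbb{E}[a(T)^2]=1+2\delta^2/(2+\delta^2)$ rather than your equivalent $(2+3\delta^2)/(2+\delta^2)$, and it defers the existence discussion to a separate remark.
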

\begin{proof}
	It is straightforward to verify  that
	\begin{align*}
		H(T)= \log(2+\delta^2)+\log(2\alpha\beta^{1/2})+\log(\sqrt{2\pi})
		+
		{1\over 2}\mathbb{E}[a^2(T)]
		+
		\int_{0}^{\infty}\log\left\{{t^{3/2}/(t+\beta)\over (1-\delta a(t))^2+1}\right\}f(t;\alpha,\beta,\delta){\rm d}t.
	\end{align*}
	Since $a(T)\sim\text{ASN}(\delta)$, by Proposition \ref{prop:01} we have
	$
	\mathbb{E}[a^2(T)]=1 +{2\delta^2/(2+\delta^2)}.
	$
	Therefore, the identity \eqref{ident-entropy} is verified considering 
	$
	C(\alpha,\beta,\delta)
	=
	\log(2+\delta^2)+\log(2\alpha\beta^{1/2})+\log(\sqrt{2\pi})
	+
	(1 +{2\delta^2/(2+\delta^2)})/2.
	$
\end{proof}
\begin{rem}
	If $T\sim\text{BBS}(\alpha,\beta,\delta)$ and $T\geqslant 1$, then, the Shannon entropy always exists.
	In fact, by Jensen's inequality (see, e.g.,  \cite{clch:01}),
	Minkosky inequality (see, e.g., \cite{nts:55}) and Remark \ref{exist-vl} we obtain
	\begin{align*}
		&\left|\mathbb{E}[\log(T^{3/2})]\right|
		\leqslant
		\log\mathbb{E}[T^{3/2}]
		\leqslant
		\log(\mathbb{E}[T^{2}])^{1/2} + \log(\mathbb{E}[T])^{1/2}<+\infty,
		\\[0,2cm]
		&
		\left|\mathbb{E}[\log(T+\beta)]\right|
		\leqslant 
		\log(\mathbb{E}[T]+\beta)<+\infty \quad \text{and}
		\\[0,2cm]
		&\left|\mathbb{E}[\log\left((1-\delta a(T)^2+1\right)]\right|
		\leqslant
		\log\left(
		2+\delta^2\mathbb{E}[a^2(T)]-2\delta\mathbb{E}[a(T)]\right)<+\infty,
	\end{align*}
	because $a(T)\sim\text{ASN}(\delta)$.
	Then, using \eqref{ident-entropy} and the above inequalities, the proof follows.
\end{rem}
%
%
%
%
%


\section{Estimation and inference}\label{sec:3}
\subsection{Maximum likelihood estimation}\label{sec:mleestim}

Let $(t_{1},\ldots,t_{n})$ be a random sample of size $n$ from the BBS
distribution with PDF in \eqref{sec2:02}. Considering $\delta$ known,
it follows that the log-likelihood function, without the constant, is given by
\begin{align*}
	\ell(\bm\theta) 
	= 
	-n\log(\alpha)-\frac{n}{2}\log(\beta)
	+  
	\sum_{i=1}^{n}  \log\left(1+(1- \delta a(t_i))^2\right)
	-\frac{1}{2} \sum_{i=1}^{n} a^2(t_i)
	+
	\sum_{i=1}^{n}  \log\left(t_{i}+\beta\right), 
\end{align*}
where ${\bm \theta} = (\alpha, \beta)$. 
Since
\begin{align}\label{first-order}
	{\partial \over \partial\alpha}a(t)
	=
	-{t^{-1/2}\over \alpha^2\beta^{1/2}}(t-\beta)
	\quad 
	\text{and}
	\quad 
	{\partial \over \partial\beta}a(t)
	=
	-{t^{-1/2}\over 2\alpha}(\beta^{-3/2}t+2),
\end{align}
taking the  first derivatives with respect to 
$\alpha$ and $\beta$ and equating them to zero, we have
\begin{align}\label{sec4.1:02}
	\frac{\partial}{\partial\alpha}\ell(\bm\theta) 
	&=
	-\frac{n}{\alpha}
	-
	\sum_{i=1}^{n}
	\left(
	{2\delta(1-\delta a(t_i))\over 1+(1-\delta a(t_i))^2}
	+
	a(t_i)
	\right)
	{\partial \over \partial\alpha}a(t_i)
	=
	0 
	\quad \text{and}
	\nonumber
	\\ 
	\\ 
	\frac{\partial}{\partial\beta} \ell(\bm\theta) 
	&=
	-\frac{n}{2\beta }
	-
	\sum_{i=1}^{n}
	\left(
	{2\delta(1-\delta a(t_i))\over 1+(1-\delta a(t_i))^2}
	+
	a(t_i)
	\right)
	{\partial \over \partial\beta}a(t_i)
	+
	\sum _{i=1}^n \frac{1}{t_i+\beta}
	=
	0. \nonumber
\end{align}
%

The ML estimates $\widehat{\alpha}$ and $\widehat{\beta}$ of $\alpha$ and $\beta$, 
respectively, are obtained by solving an iterative procedure for non-linear optimization 
of the system of equations in \eqref{sec4.1:02}, such as the Broyden-Fletcher-Goldfarb-Shanno (BFGS) quasi-Newton method; see \cite{mjm:00}. 
{
	We implement the BFGS algorithm in the \texttt{R} software, available at \url{http://cran.r-project.org}, by the function \texttt{optim}.
	
	The ML estimator $\widehat{\bm \theta}$, under some standard regularity conditions (see Subsection \ref{Confidence Intervals}), is consistent and follows a normal joint asymptotic distribution with mean ${\bm \theta}$
	and covariance matrix $\bm{\Sigma}(\widehat{\bm \theta})$. Furthermore, $\bm{\Sigma}(\widehat{\bm \theta})$ can be obtained from the corresponding expected Fisher information matrix, ${\cal I}({\bm \theta})$ say. Thus, we have
	$$
	\sqrt{n} \big(\widehat{{\bm \theta}} -{\bm \theta}\big) \quad \stackrel{D}{\rightarrow}\quad \textrm{N}_{2}\big(\bm{0}_{(2)\times 1},
	\bm{\Sigma}(\widehat{\bm \theta}) = {\cal J}({\bm \theta})^{-1}\big),\quad \text{as} \quad n \to \infty,
	$$
	where $\stackrel{D}{\rightarrow}$ denotes convergence in distribution, $\bm{0}_{(2)\times 1}$
	is a $(2)\times 1$ vector of zeros and
	$
	{\cal J}({\bm \theta}) = \lim_{n\to\infty}\frac{1}{n} {\cal I}({\bm \theta}).
	$
	Here, we approximate the expected Fisher information matrix by its observed version, and the square root of each diagonal
	element of its inverse matrix is used to approximate the associated
	standard error (SE); see \cite{eh:78}.

	We can use the profile log-likelihood for finding the value of $\delta$. In fact, this parameter is assumed to be fixed in the log-likelihood function, 
	because some difficulties in calculating it by the ML method were reported.} Generally,
two steps are required to estimate $\delta$: 
\begin{itemize}
	\item[i)] Let $\delta_{i}=i$ and for each $i=-20,\ldots,0,\ldots,20$ compute the ML 
	estimates of $\alpha$ and $\beta$ by solving the 
	system of equations in \eqref{sec4.1:02};
	\item[ii)] Select the final estimate of $\delta$ as the one which maximizes 
	the log-likelihood function and also select the associated estimates 
	of $\alpha$ and $\beta$ as final ones.
\end{itemize}


%
%
%
%
{Case of random censoring.}\label{rendomestima}
Suppose that the time to the event of interest is not 
completely observed and it may be subject to right censoring. 
Let $c_{i}$ denote the censoring time and $t_{i}$ the time to 
the event of interest. We observe $y_{i} = \min\{t_{i},c_{i} \}$, 
whereas $\tau_{i} = I(t_{i} \leq c_{i})$ is such that 
$\tau_{i} = 1$ if $y_{i}$ is the time to the event of 
interest and $\tau_{i} = 0$ if it is right censored, 
for $i = 1,\ldots, n$. Let ${\bm \theta}=(\alpha,\beta)$ 
denote the parameter vector of the BBS model 
given in \eqref{sec2:02} with $\delta$ known. From $n$ pairs of times and censoring 
indicators $(t_{1}, \tau_{1}),\ldots, (t_{n}, \tau_{n})$, 
the corresponding likelihood function obtained 
under uninformative censoring can be expressed as
%
%
%
\begin{align}\label{rbbs}
	L(\bm\theta) 
	=
	\prod^{n}_{i=1}
	f(t_{i};\alpha,\beta,\delta)^{\tau_{i}}(1-F(t_{i};\alpha,\beta,\delta))^{1-\tau_{i}} 
	&= 
	\prod^{n}_{i=1}\left\{\frac{1+(1-\delta{a(t_{i})})^2}{2+\delta^2}\phi(a(t_{i}))
	\frac{t_{i}^{-3/2}(t_{i}+\beta)}{2\alpha\,\beta^{1/2}}\right\}^{\tau_{i}}  \nonumber
	\\[0,2cm] &\quad
	\times \left[1 - \Phi(a(t_{i}))+
	\delta\left(\frac{2-\delta{t_{i}}}{2+\alpha^2}\right)\phi(a(t_{i}))\right]^{1-\tau_{i}}.
\end{align}
Therefore, the log-likelihood function for the BBS 
model obtained from \eqref{rbbs} is given by
\begin{align}\label{rbbs1}
	\textstyle \ell(\bm\theta)  =
	-\omega_{i}\eta({\bm \theta}) + 
	\sum^{n}_{i=1}\tau_{i}\log(1+(1-\delta{a(t_{i})})^2) 
	&+ 
	\sum^{n}_{i=1} \tau_{i}\log(\phi(a(t_{i})))\nonumber  
	-\frac{3}{2}\sum^{n}_{i=1}\tau_{i}\log(t_{i})  + \sum_{i=1}^{n}\tau_{i}\log(t_{i}+\beta) 
	\nonumber\\[0,2cm]
	&+ \sum^{n}_{i=1}(1-\tau_{i})
	\log\left[
	1 - \Phi(a(t_{i}))+
	\delta\left(\frac{2-\delta{t_{i}}}{2+\alpha^2}\right)\phi(a(t_{i}))
	\right],
\end{align}
where 
$\omega_{i}= \sum^{n}_{i=1}\tau_{i}$ 
and 
$\eta({\bm \theta})=\log(2\alpha\beta^{1/2}(2+\delta^{2}))$. The parameter vector ${\bm \theta}$ may be estimated 
using an iterative procedure for non-linear optimization (BFGS method) of the log-likelihood function \eqref{rbbs1}. The estimation of 
$\delta$ can be performed using the profile log-likelihood as mentioned earlier in Section \ref{sec:mleestim}.



\subsection{Confidence intervals}\label{Confidence Intervals}

In this subsection we present confidence intervals (CIs) for 
$S(t;\alpha,\beta,\delta)$, $\mathbb{E}[T]$ and $\mathrm{Var}[T]$, where 
$T\sim\mbox{BBS}(\alpha,\beta,\delta)$ and $\delta$ is known.

\sloppy  Let $\{T_{n},n \geq 1\}$ be a sequence of RVs. We will say that $\{T_n\}$ 
is asymptotically normal (AN) with mean $\mu_n$ and variance $\sigma_n^2$,
and write $T_n\sim AN(\mu_n,\sigma_n^2)$, if $\sigma_n>0$ and as $n\to\infty,$
\[
{T_n-\mu_n\over \sigma_n}\longrightarrow N(0,1).
\]
Here $\mu_n$ is not necessarily the mean of $T_n$ and $\sigma_n^2$, not
necessarily its variance. This is, for sufficiently large $n$, for each $t\in\mathbb{R}$ we can approximate the probability
$\mathbb{P}(T_n\leqslant t)$ by $\mathbb{P}(Z\leqslant ((t-\mu_n)/\sigma_n))$ where
$Z$ is $N(0,1)$.

Let 
${\bm \theta} = (\alpha, \beta)^{\top}$
in 
${\bm \Theta}$
and $\rho \in (0,1)$.
The random interval $(\underline{\theta}(T_1,\ldots,T_n), \overline{\theta}(T_1,\ldots,T_n))$ 
will be called a CI at confidence level $1-\rho$ for the parameter
${\bm\theta}$, provided that
\[
\mathbb{P}\left(\underline{\theta}(T_1,\ldots,T_n)
<
\bm\theta
<
\overline{\theta}(T_1,\ldots,T_n)\right)
\geqslant
1-\rho.
\]

In what follows, we assume $\ell(\bm\theta)$ holds the following standard regularity conditions:
\begin{enumerate}
	\item The parameter space, defined by ${\bm \Theta}$, is open and $\ell({\bm\theta})$ has a global maximum 
	at ${\bm \Theta}$; 
	\item For almost all $t$, the fourth-order log-likelihood derivatives with respect to 
	the model parameters exist and are continuous in an open subset of ${\bm\Theta}$ that contains 
	the true parameter ${\bm\theta}$; 
	\item The support set of $t\mapsto f(t;{\bm \theta},\delta)$, for ${\bm\theta}$ 
	in ${\bm \Theta}$, does
	not depend on ${\bm \theta}$; 
	\item The expected information matrix ${\cal I}({\bm\theta})$ is positive definite and finite.
	We remember that 
	the information matrix ${\cal I}({\bm \theta})$ is a $2\times 2$ matrix  
	with elements  ${\cal I}_{j,k}(\bm\theta)$ $j,k=1,2,$ defined by
	\[
	{\cal I}_{j,k}(\bm\theta)
	=
	\mbox{Cov}
	\left({\partial \over \partial\theta_j} \log f(T;{\bm \theta},\delta),   
	{\partial \over \partial\theta_k} \log f(T;{\bm \theta},\delta)\right) ,
	\quad 
	\theta_j,\theta_k\in \{\alpha,\beta\}.
	\]
\end{enumerate}  
These regularity conditions are not restrictive and hold for the models cited in this work.
{
	Let
	\begin{align*}
		{\cal V}(t;{\bm \theta},\delta)
		=
		\dfrac{2\delta[1+(1-\delta a(t))^2] - 4\delta^2(1-\delta a(t))^2}
		{[1+(1-\delta a(t))^2]^2}
		-1,
\quad 
		{\cal W}(t;{\bm \theta},\delta)
		=
		\dfrac{2\delta(1-\delta a(t))}
		{1+(1-\delta a(t))^2}
		+
		a(t).
	\end{align*}	
	The Fisher information matrix may also be written as
	\[
	{\cal I}_{j,k}(\bm\theta)
	=
	-
	\mathbb{E}
	\left(\dfrac{\partial^2}{\partial \theta_j \partial \theta_k}\log f(T;{\bm \theta},\delta)\right),
	\quad 
	\theta_j,\theta_k\in \{\alpha,\beta\},
	\]
	where
	\begin{align*}
		\dfrac{\partial^2}{\partial \alpha^2}\log f(t;{\bm \theta},\delta)
		&=
		\dfrac{1}{\alpha^2}
		+
		{\cal V}(t;{\bm \theta},\delta)
		\Big(\dfrac{\partial}{\partial \alpha}a(t)\Big)^2
		-
		{\cal W}(t;{\bm \theta},\delta)\dfrac{\partial^2}{\partial \alpha^2}a(t),
		\\[0,2cm]
		\dfrac{\partial^2}{\partial \beta^2}\log f(t;{\bm \theta},\delta)
		&=\!
		\dfrac{1}{4\beta^2}
		\!+\!
		{\cal V}(t;{\bm \theta},\delta)
		\Big(\dfrac{\partial}{\partial \beta}a(t)\Big)^2
		\!\!\!-\!
		{\cal W}(t;{\bm \theta},\delta)\dfrac{\partial^2}{\partial \beta^2}a(t)
		\!-\!
		\dfrac{1}{(t+\beta)^2},
		\\[0,2cm]
		\dfrac{\partial^2}{\partial \alpha\partial\beta}\log f(t;{\bm \theta},\delta)
		&=
		\dfrac{\partial^2}{\partial \beta\partial \alpha}\log f(t;{\bm \theta},\delta)
		=
		{\cal V}(t;{\bm \theta},\delta)
		\dfrac{\partial}{\partial \alpha}a(t)
		\dfrac{\partial}{\partial \beta}a(t)
		-
		{\cal W}(t;{\bm \theta},\delta)\dfrac{\partial^2}{\partial \alpha\partial\beta}a(t),
	\end{align*}
	The above
	first-order partial derivatives of $a(\cdot)$ with respect to 
	$\alpha$ and $\beta$
	were calculated in \eqref{first-order} and the  respective
	second-order partial derivatives are given by
	\begin{align*}
		\dfrac{\partial^2}{\partial \alpha^2}a(t)
		=
		\dfrac{2t^{-1/2}}{\alpha^3\beta^{1/2}}(t-\beta),
		\quad 
		\dfrac{\partial^2}{\partial \beta^2}a(t)
		=
		\dfrac{3t^{-1/2}\beta^{-5/2}}{4\alpha},
		\quad
		\dfrac{\partial^2}{\partial \alpha\partial\beta}a(t)
		=
		\dfrac{\partial^2}{\partial \beta\partial\alpha}a(t)
		=
		\dfrac{t^{-1/2}}{2\alpha^2}(\beta^{-3/2}+2).
	\end{align*}
	Here, the mixed partial differentiations are commutative
	at a given point ${\bm \theta}$ in $\mathbb{R}^2$
	because the corresponding functions have
	continuous second partial derivatives at that point
	(Schwarz's Theorem).
}
\subsubsection{Confidence Interval for \(S(t;{\bm{\theta}},\delta)\).}\label{sobre}

Let $\widehat{\alpha}$ and $\widehat{\beta}$ be
ML estimates of $\alpha$ and $\beta$, respectively. 
It is known that the ML estimate of
${\widehat{\bm\theta}}=(\widehat{\alpha},\widehat{\beta})^{\top}$ 
has normal 
asymptotic distribution, 
with null mean vector and asymptotic covariance matrix given by the inverse 
of the information matrix $I({\bm\theta})$. I.e.,
\[
\sqrt{n}({\widehat{\bm\theta}}-{\bm\theta}) \sim AN \left({\bm 0},\left[I(\bm\theta)\right]^{-1}\right).
\]

Since the function ${\bm\theta}\mapsto S(t;{\bm\theta},\delta)$, $\forall t>0$,
is continuously differentiable, by the Delta method we have
\begin{align*}
	\sqrt{n}
	\big(S(t;\widehat{\bm\theta},\delta)-S(t;{\bm\theta},\delta)\big)
	\sim 
	AN\left({\bm 0},J_{S}(\bm\theta)
	\,
	\left[I(\bm\theta)\right]^{-1}
	\,
	J_{S}(\bm\theta)^{\top}\right),
\end{align*}
where 
$
J_{S}(\bm\theta)
=
J_{S}({\bm\theta};t)
=
\begin{bmatrix}
{\partial \over \partial \alpha} S(t;{\bm\theta},\delta) & \
{\partial \over \partial \beta} S(t;{\bm\theta},\delta) 
\end{bmatrix}_{1\times 2}
$ 
is the Jacobian of the function ${\bm\theta}\mapsto S(t;{\bm\theta},\delta)$.

As $\widehat{\bm\theta}$ is a ML estimate of $\bm\theta$, 
the asymptotic variance of $S(t;\widehat{\bm\theta},\delta)$ can be estimated by
\begin{align*}
	\mbox{Var}\big[S(t;\widehat{\bm\theta},\delta)\big]
	&\approx
	J_{S}(\widehat{\bm\theta})
	\,
	\left[I(\widehat{\bm\theta})\right]^{-1}
	\,
	J_{S}(\widehat{\bm\theta})^{\top}.
\end{align*}
As $\widehat{\bm\theta}$ is consistent (because it is a ML estimate), 
by Slutsky's theorem we have
\begin{align}\label{cov1}
	\sqrt{n}
	\big(S(t;\widehat{\bm\theta},\delta)-S(t;{\bm\theta},\delta)\big)
	\sim 
	AN\big({\bm 0},\mbox{Var}\big[S(t;\widehat{\bm\theta},\delta)\big]\big).
\end{align}
If $0<\rho<1$, using \eqref{cov1}, the CI at confidence level $1-\rho$  for 
$S(t;{\bm\theta},\delta)$ is obtained from the following identity: 
\begin{align*}
	\lim_{n\to\infty}
	\mathbb{P}
	\left(
	\frac
	{
		|S(t;\widehat{\bm\theta},\delta)-S(t;{\bm\theta},\delta)|
	}
	{ 
		\widehat{\sigma}(t)
	}
	< 
	{z_{\rho/2}\over \sqrt{n}}
	\right)
	=
	\mathbb{P}(|Z|<z_{\rho/2})
	\geqslant
	1-\rho,
\end{align*}
where $z_{\rho/2}$ is the $\rho/2$-quantile of the normal distribution 
and $\widehat{\sigma}^2(t)=\mbox{Var}[S(t;\widehat{\bm\theta},\delta)]$.
Then, the random interval 
\begin{align}\label{pre-int-var}
	\textstyle
	\left(
	S(t;\widehat{\bm\theta},\delta)
	-
	{z_{\rho/2}\over \sqrt{n}} \widehat{\sigma}(t)
	\, , \,
	S(t;\widehat{\bm\theta},\delta)
	+
	{z_{\rho/2}\over \sqrt{n}} \widehat{\sigma}(t)
	\right)
\end{align}
is a CI at confidence level $1-\rho$ for 
$S(t;{\bm\theta},\delta)$, $\forall t>0$.

\subsubsection{Confidence Interval for
	\(\mathbb{E}[T|{\bm{\theta}}]= \mathbb{E}[T]\).}\label{esp}
Since $T\sim\mbox{BBS}({\bm{\theta}},\delta)$ is a positive RV, we have
the identity 
$
\mathbb{E}[T|{\bm{\theta}}]
=
\int_{0}^{\infty}S(t;{\bm\theta},\delta){\rm d}t.
$
Using this identity and denoting $\widehat{\sigma}^2(t)=\mbox{Var}[S(t;\widehat{\bm\theta},\delta)]$ 
note that $\eqref{pre-int-var}$ implies that the set 
\[
\textstyle
\left\{
\mathbb{E}[T|\widehat{\bm{\theta}}]
-
{z_{\rho/2}\over \sqrt{n}} \int_{0}^{\infty} \widehat{\sigma}(t) {\rm d}t
<
\mathbb{E}[T|{\bm{\theta}}]
<
\mathbb{E}[T|\widehat{\bm{\theta}}]
+
{z_{\rho/2}\over \sqrt{n}} \int_{0}^{\infty} 
\widehat{\sigma}(t) {\rm d}t
\right\}
\]
contains the set
\[
\textstyle
\left\{
S(t;\widehat{\bm\theta},\delta)
-
{z_{\rho/2}\over \sqrt{n}} \widehat{\sigma}(t)
<
S(t;{\bm\theta},\delta)
<
S(t;\widehat{\bm\theta},\delta)
+
{z_{\rho/2}\over \sqrt{n}} \widehat{\sigma}(t)
\right\}.
\]
Therefore, the random interval 
\[
\textstyle
\left(
\mathbb{E}[T|\widehat{\bm{\theta}}]
-
{z_{\rho/2}\over \sqrt{n}} \int_{0}^{\infty} \widehat{\sigma}(t) {\rm d}t
\, , \,
\mathbb{E}[T|\widehat{\bm{\theta}}]
+
{z_{\rho/2}\over \sqrt{n}} \int_{0}^{\infty} \widehat{\sigma}(t) {\rm d}t
\right)
\]
provides us a CI at confidence level $1-\rho$ for $\mathbb{E}[T|{\bm{\theta}}]$.
If the lower limit of the CI is negative, we will replace it with zero.
\subsubsection{Confidence Interval for 
	\(\mathrm{Var}[T|{\bm\theta}] = \mathrm{Var}[T]\).}

Let 
$
\widehat{L}_\pm(t)
=
S(t;\widehat{\bm\theta},\delta)
\pm
z_{\rho/2} \widehat{\sigma}(t)/\sqrt{n}
$
where $\widehat{\sigma}^2(t)=\mbox{Var}[S(t;\widehat{\bm\theta},\delta)]$,
$t>0$.
Assume that
$
\widehat{L}_-(t)>0,
$
otherwise we replace this lower limit with zero.

Let
$
\textstyle A=
\{
\widehat{L}_-(t)
<S(t;\bm\theta,\delta)<
\widehat{L}_+(t)
\}
$
and 
$
\textstyle
B=
\{
\int_{0}^{\infty} \widehat{L}_-(t) {\rm d}t
<\mathbb{E}[T|{\bm{\theta}}]<
\int_{0}^{\infty} \widehat{L}_+(t) {\rm d}t
\}.
$
Using the identity
$
\mathbb{E}[T^2|{\bm{\theta}}]
=
2\int_{0}^{\infty}tS(t;{\bm\theta},\delta){\rm d}t,
$
let's denote also
$
\textstyle
C=
\{
2 \int_{0}^{\infty}t \widehat{L}_-(t) {\rm d}t
<
\mathbb{E}[T^2|{\bm{\theta}}]
<
2 \int_{0}^{\infty}t \widehat{L}_+(t) {\rm d}t
\}
$
and
\begin{align*}
	\textstyle
	D=
	\left\{
	-(
	\int_{0}^{\infty} \widehat{L}_+(t) {\rm d}t
	)^2
	<-(\mathbb{E}[T|{\bm{\theta}}])^2<
	-(
	\int_{0}^{\infty} \widehat{L}_-(t) {\rm d}t
	)^2
	\right\}.
\end{align*}
Note that $A\subseteq B, C, D$ and $B\cap D=B$. Hence, if 
$
\big(
\widehat{L}_-(t)
,
\widehat{L}_+(t)
\big)
$ 
is a random CI for
$S(t;\bm\theta,\delta)$ with confidence coefficient $1-\rho$ (by Section \ref{sobre}), 
for each $t>0$, then 
$
\big(
\int_{0}^{\infty} \widehat{L}_-(t) {\rm d}t
,
\int_{0}^{\infty} \widehat{L}_+(t) {\rm d}t
\big)
$
and 
$
\big(
2 \int_{0}^{\infty}t \widehat{L}_-(t) {\rm d}t
,
2 \int_{0}^{\infty}t \widehat{L}_+(t) {\rm d}t
\big)
$
are also (random) CIs for $\mathbb{E}[T|{\bm{\theta}}]$ and
$\mathbb{E}[T^2|{\bm{\theta}}]$ respectively, with confidence coefficient $1-\rho$ each.

Since
\begin{align*}
	\textstyle
	\mathrm{I}_{\mathrm{Var}}=
	\Big\{
	2 J(\widehat{L}_-,\widehat{L}_+)
	<
	\mathrm{Var}(T|\bm\theta)
	<
	2 J(\widehat{L}_+,\widehat{L}_-)
	\Big\}
	\supseteq
	B\cap C\cap D=B\cap C,
\end{align*}
where $J$ denotes the operator $J(f,g)=\int_{0}^{\infty}t f(t) {\rm d}t
-
(
\int_{0}^{\infty} g(t) {\rm d}t
)^2$,
we have
\[
\mathbb{P}(\mathrm{I}_{\mathrm{Var}})
\geqslant
\mathbb{P}(B\cap C)\geqslant\mathbb{P}(B)+\mathbb{P}(C)-1\geqslant 1-2\rho.
\]
Therefore, 
$
\big(
2 J(\widehat{L}_-,\widehat{L}_+),
2 J(\widehat{L}_+,\widehat{L}_-)
\big)
$
is a (random) CI for
$\mathrm{Var}(T|\bm\theta)$ with confidence coefficient 
$1-2\rho$. Again, if the lower limit of 
the CI is negative, we will replace it with zero.
\begin{rem}
	Analogously to that done in Subsection \ref{sobre}, we can construct a CI for 
	the function
	$\log(-\log(S(t;\alpha,\beta,\delta)))$.
\end{rem}

\section{Monte Carlo simulation}\label{sec:4}
{
	Two MC simulation studies were carried out to evaluate the performance of the 
	ML estimators of the proposed BBS model. The first study considers simulated 
	data generated from the BBS distribution, whereas the second one has 
	as its data generating process the BS, log-normal (LN) and MXBS distributions. 
	All numerical evaluations were done in
	the \texttt{R} software; see \cite{r:18}. The used \texttt{R} codes are available upon request.
	
	\subsection{Simulation study 1}
	In this first study we evaluate the performance of the ML estimators for the proposed BBS model, considering the simulated data generated from the same model. The simulation scenario assumes the sample} 
sizes $n \in \{10, 50\}$, the values of the 
shape parameter as $\alpha \in \{0.10,0.50,1.00,1.50\}$, the values of the asymmetric parameter as $\delta \in \{-10,-5,-1,1,5,10\}$, and 10,000 MC replications. 
The censoring proportion is $p\in\{0.0,0.1,0.3\}$; see Section \ref{rendomestima}. 
Note that the values of the shape parameter $\alpha$ 
have been chosen in order to study the performance under low, moderate
and high skewness.

For each value of the parameter $\delta$, sample size and censoring proportion, 
the empirical values for the bias (Bias) and mean squared error (MSE) 
of the ML estimators are reported in Tables~\ref{tab:x}--\ref{tab:x1}. From these tables, 
note that, as the sample size increases, the ML estimators 
become more efficient, as expected. 
We can also note that, 
as the censoring proportion increases, 
the performances of the estimators of $\alpha$ and 
$\beta$, deteriorate. 
It is interesting to note two points on the increasing of the bias 
of $\widehat\beta$: (i) when the skewness increases, the bias of $\widehat\beta$ 
increases, which is expected as the original distribution occurs 
in the BS, see for example \cite{lsc:08}; and (ii) note that there seems to be an increase in the bias of 
$\widehat\beta$ when we decrease the values 
of the parameter $\delta$, see the cases $\delta = \{-1,1\}$.
In general, all of these results show the 
good performance of the proposed model.

\begin{table}[!ht]
	\footnotesize
	\centering
	\caption{Simulated values of biases (MSEs within parentheses) of the estimators of the BBS model.}
	\label{tab:x}
	\renewcommand{\arraystretch}{1.3}
	\resizebox{\linewidth}{!}{
		\begin{tabular}{clrrrrrrrrrrrrrrrrrr}
			\hline
			&        &       &&\multicolumn{2}{c}{BBS($\alpha=0.1,\beta=1.0,\delta$)} && \multicolumn{2}{c}{BBS($\alpha=0.5,\beta=1.0,\delta$)}    \\ \cline{5-6} \cline{8-9} \\[-0.5cm]
			censoring \%&  $n$          & \multicolumn{1}{c}{$\delta$}       &&\multicolumn{1}{c}{Bias($\widehat{\alpha}$)}&\multicolumn{1}{c}{Bias($\widehat{\beta}$)}&&
			\multicolumn{1}{c}{Bias($\widehat{\alpha}$)}&\multicolumn{1}{c}{Bias($\widehat{\beta}$)}\\ \hline \\ [-0.2cm]
			
			0\%&10       &$-$10 &&$-$0.0011\,(0.0002)   &$-$0.0010\,(0.0009)   &&$-$0.0066\,(0.0062) &   0.0028\,(0.0184)   \\[-0.2cm]
			&         &$-$5  &&$-$0.0021\,(0.0003)   &$-$0.0034\,(0.0013)   &&$-$0.0117\,(0.0075) &$-$0.0042\,(0.0224)   \\[-0.2cm]
			&         &$-$1  &&$-$0.0240\,(0.0012)   &$-$0.0418\,(0.0054)   &&$-$0.1224\,(0.0308) &$-$0.1581\,(0.0824)    \\[-0.2cm]
			&         &1     &&$-$0.0235\,(0.0012)   &   0.0480\,(0.0066)   &&$-$0.1192\,(0.0303) &   0.2824\,(0.2153)    \\[-0.2cm]
			&         &5     &&$-$0.0022\,(0.0003)   &   0.0050\,(0.0015)   &&$-$0.0111\,(0.0078) &   0.0293\,(0.0376)    \\[-0.2cm]
			&         &10    &&$-$0.0012\,(0.0002)   &   0.0020\,(0.0010)   &&$-$0.0056\,(0.0063) &   0.0169\,(0.0244)    \\[0.1cm]
			&50       &$-$10 &&$-$0.0001\,($<$0.0001)&$-$0.0001\,(0.0001)   &&$-$0.0012\,(0.0004) &   0.0004\,(0.0026)    \\[-0.2cm]
			&         &$-$5  &&$-$0.0003\,($<$0.0001)&   0.0002\,(0.0001)   &&$-$0.0025\,(0.0010) &   0.0027\,(0.0028)   \\[-0.2cm]
			&         &$-$1  &&$-$0.0024\,(0.0002)   &$-$0.0182\,(0.0018)   &&$-$0.0139\,(0.0040) &$-$0.0597\,(0.0254)    \\[-0.2cm]
			&         &1     &&$-$0.0024\,(0.0002)   &   0.0205\,(0.0022)   &&$-$0.0133\,(0.0040) &   0.0927\,(0.0475)   \\[-0.2cm]
			&         &5     &&$-$0.0003\,($<$0.0001)&   0.0001\,(0.0001)   &&$-$0.0019\,(0.0010) &   0.0019\,(0.0028)    \\[-0.2cm]
			&         &10    &&$-$0.0001\,($<$0.0001)&   0.0003\,(0.0001)   &&$-$0.0010\,(0.0009) &   0.0034\,(0.0026)   \\[-0.2cm]
			
			&         &      &&                      &                      &&                    &      \\[-0.2cm]
			
			10\%&10   &$-$10 &&0.0039\,(0.0009)      &$-$0.0241\,(0.0081)   &&$-$0.0084\,(0.0083) &0.0393\,(0.0392)      \\[-0.2cm]
			&         &$-$5  &&0.0070\,(0.0013)      &$-$0.0466\,(0.0133)   &&$-$0.0184\,(0.0112) &0.0276\,(0.0575)      \\[-0.2cm]
			&         &$-$1  &&$-$0.0006\,(0.0011)   &$-$0.1308\,(0.0183)   &&0.0023\,(0.0301)    &$-$0.4827\,(0.2416)      \\[-0.2cm]
			&         &1     &&0.0012\,(0.0011)      &0.1533\,(0.0258)      &&0.0431\,(0.0381)    &1.0269\,(1.1883)      \\[-0.2cm]
			&         &5     &&$-$0.0029\,(0.0004)   &0.0002\,(0.0043)      &&$-$0.0233\,(0.0095) &0.0376\,(0.1120)      \\[-0.2cm]
			&         &10    &&$-$0.0007\,(0.0004)   &$-$0.0047\,(0.0033)   &&$-$0.0059\,(0.0079) &0.0281\,(0.1520)      \\[0.1cm]
			&50       &$-$10 &&0.0019\,(0.0002)      &0.0109\,(0.0022)      &&0.0053\,(0.0011)    &0.0659\,(0.0074)      \\[-0.2cm]
			&         &$-$5  &&0.0070\,(0.0007)      &$-$0.0027\,(0.0062)   &&0.0095\,(0.0015)    &0.1094\,(0.0170)      \\[-0.2cm]
			&         &$-$1  &&0.0091\,(0.0003)      &$-$0.1387\,(0.0195)   &&0.0391\,(0.0083)    &$-$0.5053\,(0.2571)      \\[-0.2cm]
			&         &1     &&0.0116\,(0.0004)      &0.1632\,(0.0271)      &&0.0904\,(0.0171)    &1.0871\,(1.2133)      \\[-0.2cm]
			&         &5     &&0.0015\,(0.0003)      &$-$0.0118\,(0.0031)   &&0.0001\,(0.0011)    &$-$0.0680\,(0.0078)      \\[-0.2cm]
			&         &10    &&0.0006\,(0.0001)      &$-$0.0094\,(0.0008)   &&0.0003\,(0.0011)    &$-$0.0423\,(0.0040)      \\[-0.2cm]
			
			&         &      &&                      &                      &&                    &      \\[-0.2cm]
			
			30\%&10   &$-$10 &&0.0875\,(0.0123)      &$-$0.2085\,(0.0677)   &&0.0195\,(0.0232)    &0.0854\,(0.1273)     \\[-0.2cm]
			&         &$-$5  &&0.0811\,(0.0106)      &$-$0.2206\,(0.0638)   &&0.0035\,(0.0255)    &0.0741\,(0.1459)      \\[-0.2cm]
			&         &$-$1  &&0.0031\,(0.0013)      &$-$0.1311\,(0.0187)   &&$-$0.0680\,(0.0286) &$-$0.4564\,(0.2194)      \\[-0.2cm]
			&         &1     &&0.0004\,(0.0012)      &0.1497\,(0.0250)      &&0.0521\,(0.0550)    &1.0430\,(1.4937)      \\[-0.2cm]
			&         &5     &&0.0410\,(0.0062)      &$-$0.0509\,(0.0390)   &&0.0283\,(0.0343)    &0.1593\,(0.4103)      \\[-0.2cm]
			&         &10    &&0.0679\,(0.0102)      &$-$0.1303\,(0.0538)   &&0.0307\,(0.0290)    &0.0776\,(0.2059)      \\[0.1cm]
			&50       &$-$10 &&0.1319\,(0.0185)      &$-$0.2877\,(0.0945)   &&0.0324\,(0.0037)    &0.1056\,(0.0234)      \\[-0.2cm]
			&         &$-$5  &&0.1144\,(0.0139)      &$-$0.2838\,(0.0828)   &&0.0483\,(0.0082)    &0.1750\,(0.0597)      \\[-0.2cm]
			&         &$-$1  &&0.0126\,(0.0005)      &$-$0.1400\,(0.0199)   &&$-$0.0200\,(0.0056) &$-$0.4831\,(0.2352)      \\[-0.2cm]
			&         &1     &&0.0124\,(0.0005)      &0.1635\,(0.0273)      &&0.1065\,(0.0219)    &1.0849\,(1.2121)      \\[-0.2cm]
			&         &5     &&0.0657\,(0.0097)      &$-$0.0825\,(0.0538)   &&0.0101\,(0.0039)    &$-$0.0336\,(0.0613)      \\[-0.2cm]
			&         &10    &&0.0913\,(0.0145)      &$-$0.1506\,(0.0695)   &&0.0189\,(0.0066)    &$-$0.0005\,(0.0656)      \\[-0.1cm]
			
			\hline
		\end{tabular}
	}
\end{table}

\begin{table}[htb]
	\footnotesize
	\centering
	\caption{Simulated values of biases (MSEs within parentheses) of the estimators of the BBS model.}
	\label{tab:x1}
	\renewcommand{\arraystretch}{1.3}
	\resizebox{\linewidth}{!}{
		\begin{tabular}{clrrrrrrrrrrrrrrrrrr}
			\hline
			&         &       &&\multicolumn{2}{c}{BBS($\alpha=1.0,\beta=1.0,\delta$)} && \multicolumn{2}{c}{BBS($\alpha=1.5,\beta=1.0,\delta$)}    \\ \cline{5-6} \cline{8-9} \\[-0.5cm]
			censoring \%&$n$           & \multicolumn{1}{c}{$\delta$}       &&\multicolumn{1}{c}{Bias($\widehat{\alpha}$)}&\multicolumn{1}{c}{Bias($\widehat{\beta}$)}&&
			\multicolumn{1}{c}{Bias($\widehat{\alpha}$)}&\multicolumn{1}{c}{Bias($\widehat{\beta}$)}\\ \hline\\ [-0.2cm]
			
			0\% &10       &$-$10 &&$-$0.0162\,(0.0263)   &   0.0202\,(0.0507)   &&$-$0.0244\,(0.0608) &   0.0248\,(0.0728)   \\[-0.2cm]
			&         &$-$5  &&$-$0.0258\,(0.0326)   &   0.0070\,(0.0575)   &&$-$0.0477\,(0.0779) &   0.0166\,(0.0859)   \\[-0.2cm]
			&         &$-$1  &&$-$0.2455\,(0.1298)   &$-$0.2136\,(0.1767)   &&$-$0.3764\,(0.3075) &$-$0.2438\,(0.2449)    \\[-0.2cm]
			&         &1     &&$-$0.2441\,(0.1295)   &   0.5896\,(0.9632)   &&$-$0.3801\,(0.3087) &   0.8931\,(2.3962)    \\[-0.2cm]
			&         &5     &&$-$0.0269\,(0.0314)   &   0.0728\,(0.1911)   &&$-$0.0411\,(0.0714) &   0.0980\,(0.3867)    \\[-0.2cm]
			&         &10    &&$-$0.0160\,(0.0253)   &   0.0367\,(0.0831)   &&$-$0.0258\,(0.0588) &   0.0537\,(0.1826)    \\[0.1cm]
			&50       &$-$10 &&$-$0.0023\,(0.0037)   &   0.0014\,(0.0073)   &&$-$0.0033\,(0.0083) &   0.0012\,(0.0105)    \\[-0.2cm]
			&         &$-$5  &&$-$0.0042\,(0.0042)   &   0.0046\,(0.0081)   &&$-$0.0049\,(0.0093) &   0.0058\,(0.0114)   \\[-0.2cm]
			&         &$-$1  &&$-$0.0391\,(0.0179)   &$-$0.0630\,(0.0486)   &&$-$0.0703\,(0.0456) &$-$0.0616\,(0.0618)    \\[-0.2cm]
			&         &1     &&$-$0.0385\,(0.0175)   &   0.1410\,(0.1272)   &&$-$0.0728\,(0.0448) &   0.1612\,(0.1832)   \\[-0.2cm]
			&         &5     &&$-$0.0035\,(0.0042)   &   0.0029\,(0.0078)   &&$-$0.0050\,(0.1832) &   0.0021\,(0.0115)    \\[-0.2cm]
			&         &10    &&$-$0.0033\,(0.0037)   &   0.0057\,(0.0072)   &&$-$0.0030\,(0.0115) &   0.0068\,(0.0107)   \\[-0.2cm]
			
			&         &      &&                      &                      &&                    &      \\[-0.2cm]
			
			10\%&10   &$-$10 &&0.0088\,(0.0575)      &0.1620\,(0.4960)      &&0.0456\,(0.1813)    &0.2316\,(0.6773)      \\[-0.2cm]
			&         &$-$5  &&$-$0.0074\,(0.0713)   &0.2010\,(0.2840)      &&0.0573\,(0.2973)    &0.4123\,(1.6875)      \\[-0.2cm]
			&         &$-$1  &&0.0005\,(0.1111)      &$-$0.6838\,(0.4792)   &&$-$0.0498\,(0.1986) &$-$0.7573\,(0.5941)      \\[-0.2cm]
			&         &1     &&0.1053\,(0.1851)      &2.4846\,(7.1727)      &&0.1370\,(0.3345)    &3.7838\,(18.1242)      \\[-0.2cm]
			&         &5     &&$-$0.0250\,(0.0393)   &0.0338\,(0.5171)      &&$-$0.0236\,(0.1127) &0.0605\,(1.5485)      \\[-0.2cm]
			&         &10    &&$-$0.0142\,(0.0276)   &0.0270\,(0.1993)      &&$-$0.0380\,(0.0587) &0.0121\,(0.2071)      \\[0.1cm]
			&50       &$-$10 &&0.0187\,(0.0053)      &0.1260\,(0.0256)      &&0.0272\,(0.0115)    &0.1458\,(0.0361)      \\[-0.2cm]
			&         &$-$5  &&0.0219\,(0.0060)      &0.2016\,(0.0569)      &&0.0314\,(0.0134)    &0.2368\,(0.0820)      \\[-0.2cm]
			&         &$-$1  &&0.0746\,(0.0268)      &$-$0.7159\,(0.5141)   &&0.0963\,(0.0492)    &$-$0.7910\,(0.6272)      \\[-0.2cm]
			&         &1     &&0.2470\,(0.0974)      &2.5919\,(6.9146)      &&0.3733\,(0.2098)    &3.4797\,(12.8450)      \\[-0.2cm]
			&         &5     &&0.0028\,(0.0043)      &$-$0.1110\,(0.0205)   &&0.0004\,(0.0092)    &$-$0.1286\,(0.0275)      \\[-0.2cm]
			&         &10    &&0.0057\,(0.0043)      &$-$0.0619\,(0.0103)   &&0.0077\,(0.0093)    &$-$0.0763\,(0.0146)      \\[-0.2cm]
			
			&         &      &&                      &                      &&                    &      \\[-0.2cm]
			
			30\%&10   &$-$10 &&0.2393\,(0.3985)      &0.6891\,(4.0713)      &&0.4426\,(1.2441)    &1.0896\,(7.0831)      \\[-0.2cm]
			&         &$-$5  &&0.1985\,(0.2985)      &0.6169\,(1.9729)      &&0.7262\,(2.1753)    &1.9417\,(12.9445)      \\[-0.2cm]
			&         &$-$1  &&$-$0.1458\,(0.1069)   &$-$0.6624\,(0.4552)   &&$-$0.3442\,(0.2392) &$-$0.7627\,(0.5970)      \\[-0.2cm]
			&         &1     &&0.1667\,(0.3212)      &2.4961\,(7.9861)      &&0.2472\,(0.5322)    &3.5858\,(15.6473)      \\[-0.2cm]
			&         &5     &&0.0959\,(0.3339)      &0.4870\,(5.5723)      &&0.2082\,(1.0045)    &0.7895\,(11.7672)      \\[-0.2cm]
			&         &10    &&0.0774\,(0.1579)      &0.1944\,(0.9839)      &&0.1840\,(0.6911)    &0.4070\,(4.2938)      \\[0.1cm]
			&50       &$-$10 &&0.1884\,(0.3646)      &0.6260\,(8.5780)      &&0.1899\,(0.1168)    &0.3775\,(0.4992)      \\[-0.2cm]
			&         &$-$5  &&0.2240\,(0.1207)      &0.6080\,(0.7797)      &&0.4197\,(0.4973)    &0.9224\,(2.5520)      \\[-0.2cm]
			&         &$-$1  &&$-$0.1144\,(0.0284)   &$-$0.6953\,(0.4855)   &&$-$0.2762\,(0.1004) &$-$0.7957\,(0.6349)      \\[-0.2cm]
			&         &1     &&0.3004\,(0.1388)      &2.5498\,(6.7053)      &&0.4939\,(0.3467)    &3.5056\,(12.8719)      \\[-0.2cm]
			&         &5     &&0.0436\,(0.0771)      &0.0425\,(1.5990)      &&0.0406\,(0.0558)    &$-$0.0484\,(0.9883)      \\[-0.2cm]
			&         &10    &&0.0347\,(0.0223)      &$-$0.0014\,(0.4995)   &&0.0410\,(0.0156)    &$-$0.0309\,(0.0156)      \\[-0.1cm]
			\hline
		\end{tabular}
	}
\end{table}

{
	\subsection{Simulation study 2}
	
	In this second simulation study we consider the BS, LN and MXBS distributions as data generating processes, and the BBS and BBSO distributions are fitted to the simulated data. 
	Note that the BBS and BBSO models are the closest competitors, since both models do not require mixture of distributions to produce bimodality. The purpose is to evaluate how the estimators behave when the data generating process is wrong (the assumed model is different from the data generating model). In addition, we also compare the adjustments of the BBS and BBSO models by means of the fitted log-likelihood (log-lik) values. The $\text{BS}(\alpha,\beta)$, $\text{LN}(\mu,\sigma)$ and $\text{MXBS}(\alpha_{1},\beta_{1},\alpha_{2},\beta_{2},p)$ samples were generated by considering the following PDFs 
	$f_{\text{BS}}(t;\alpha,\beta)=\phi(a(t)){[t^{-3/2}(t+\beta)]}/{[2\alpha\,\beta^{1/2}]}$, 
	$f_{\text{LN}}(t;\mu,\sigma)=1/[t\sigma\sqrt{2\pi}]\exp([\log(t)-\mu]^2/[2\sigma^{2}])$ and
	$f_{\text{MXBS}}(t;\alpha_1,\beta_1,\alpha_2,\beta_2,p)=pf_{\text{BS}}(t;\alpha_1,\beta_1)+[1-p]f_{\text{BS}}(t;\alpha_2,\beta_2)$, $t>0$, where $\phi(\cdot)$ and $a(\cdot)$ are as in \eqref{sec1:02} and \eqref{at}. Moreover, the BBSO PDF is given by 
	$f_{\text{BBSO}}(t;\alpha,\beta,\gamma)=[t^{-3/2}(t+\beta)]/[4\alpha\beta^{1/2}\Phi(-\gamma)]\phi(|a(t)|+\gamma)$, where $\phi(\cdot)$, $\Phi(\cdot)$ and $a(\cdot)$ are as in \eqref{sec1:02} and \eqref{at}.

	The simulation scenario considers: sample sizes $n \in \{10, 50\}$,
	the values of the shape parameters as $\alpha,\sigma \in \{0.10,1.00,1.50,2.50,4.00\}$,
	the values of the mixing parameter as $p \in \{0.25,0.50,0.75\}$, and 1,000 MC replications. 
	In this case,  we do not consider censoring as in Simulation 1. The values of the shape parameters $\alpha,\sigma$ cover different levels of skewness. Note that the $\text{BS}(\alpha,\beta)$ and $\text{LN}(\mu,\sigma)$ PDFs are unimodal, whereas the $\text{MXBS}(\alpha_{1},\beta_{1},\alpha_{2},\beta_{2},p)$ PDF is either unimodal or bimodal. In special, the parameters of the latter distribution have been chosen to provide bimodal shapes.

	The ML estimation results are presented in Tables~\ref{tab:BS-LN} and \ref{tab:MXBS}. The empirical means for the ML estimates and fitted log-likelihood values are reported.  A look at the results in Tables~\ref{tab:BS-LN} and \ref{tab:MXBS} allows us to conclude that the proposed BBS model provides better adjustment compared to the BBSO model based on the log-likelihood values.

	\begin{table}[!ht]
		\footnotesize
		\centering
		\caption{{Empirical mean from simulated BS and LN data for the indicated model, estimator, generator, $\alpha$ and $n$.}}
		\label{tab:BS-LN}
		\renewcommand{\arraystretch}{1.0}
		\resizebox{\linewidth}{!}{
			\begin{tabular}{lrrrrrrrrrrrrrrrrrrrrrr}
				\hline
				Generator &    &          && \multicolumn{4}{c}{BBS} &&  \multicolumn{4}{c}{BBSO} \\\cline{5-8}\cline{10-13}\\ [-0.2cm]
				
				& $n$  &   $\alpha$         &&   \multicolumn{1}{c}{$\widehat\alpha$} & \multicolumn{1}{c}{$\widehat\beta$} & \multicolumn{1}{c}{$\widehat\delta$} &  \multicolumn{1}{c}{log-lik} &&   \multicolumn{1}{c}{$\widehat\alpha$} & \multicolumn{1}{c}{$\widehat\beta$} & \multicolumn{1}{c}{$\widehat\gamma$} &  \multicolumn{1}{c}{log-lik}
				\\\hline\\ [-0.2cm]
				BS($\alpha$,$\beta=1.0$)     & 10 & 0.50 && 0.3442 &  1.0235 &  0.2160  & $-$4.4525  &&  0.2616 &  1.0101 & $-$1.6996 &$-$5.1787 \\
				&    & 1.00 && 0.6927 &  1.0639 &  0.0270  & $-$10.6575 &&  0.5264 &  1.0486 & $-$1.6618 &$-$11.3504\\ 
				&    & 1.50 && 1.0527 &  1.0904 & $-$0.0320& $-$13.8437 &&  0.7942 &  1.0798 & $-$1.6259 &$-$14.5051 \\
				&    & 2.50 && 1.7765 &  1.1310 & $-$0.0160& $-$17.1733 &&  1.3337 &  1.1228 & $-$1.5805 &$-$17.7920  \\
				&    & 4.00 && 2.8464 &  1.1624 & $-$0.0280& $-$19.4770 &&  2.1465 &  1.1526 & $-$1.5497 &$-$20.0728 \\[0.05cm]
				& 50 & 0.50 && 0.4750 &   1.0187 & $-$ 0.0020 &$-$33.6220 &&  0.3385 &  1.0027 & $-$1.2025 &$-$36.7761 \\
				&    & 1.00 && 0.9652 &   1.0376 &     0.0180 &$-$64.9110 &&  0.6777 &  1.0110 & $-$1.1975 &$-$67.9465 \\ 
				&    & 1.50 && 1.4547 &   1.0266 &     0.0030 &$-$81.0343 &&  1.0176 &  1.0170 & $-$1.1928 &$-$83.9877 \\
				&    & 2.50 && 2.4309 &   1.0236 &  $-$0.0130 &$-$98.0011 &&  1.6989 &  1.0212 & $-$1.1869 &$-$100.8794 \\
				&    & 4.00 && 3.8981 &   1.0188 &  $-$0.0240 &$-$109.9385&&  2.7212 &  1.0229 & $-$1.1836 &$-$112.7911 \\[0.1cm]
				LN($\mu=1.0$,$\sigma$) & 10 & 0.50 && 0.3570 & 1.0256 & 0.1610   & $-$4.6916  && 0.2720 & 1.0140& $-$1.6841& $-$5.4663 \\
				&    & 1.00 && 0.7970 &  1.1156 &  0.0710 & $-$11.4631 &&  0.6112 &  1.0936 & $-$1.5989& $-$12.3236\\
				&    & 1.50 && 1.3940  & 1.2790 &  0.0310 &$-$15.3910  && 1.1025  & 1.2483 & $-$1.4743& $-$16.3613\\
				&    & 2.50 && 3.6272  & 1.9682 & $-$0.1580 &$-$20.5330  && 3.0648  & 1.9584 & $-$1.2293& $-$21.7278\\
				&    & 4.00 && 14.4744 &  5.8215 &  0.0030& $-$26.1805 && 13.3911 &  6.1023 & $-$0.9624& $-$27.6768\\[0.05cm]
				& 50 & 0.50 && 0.4888  &  1.0238 &  0.0000 &$-$34.9625 &&  0.3564 &  1.0034 & $-$1.1701 &$-$38.7044 \\
				&    & 1.00 && 1.0946  & 1.0938  & 0.0070  &$-$69.8828 &&  0.8280 &  1.0206 & $-$1.0800 &$-$74.9645\\
				&    & 1.50 && 1.9469  & 1.1917  & 0.0060  &$-$91.6809 &&  1.5722 &  1.0720 & $-$0.9653 &$-$98.3310\\
				&    & 2.50 && 5.5552  &  1.5043 &   0.0030& $-$125.9847 &&   5.1288 &   1.3933 &  $-$0.7391 &$-$134.8456\\
				&    & 4.00 &&30.5304  &  3.0715 &   0.0040& $-$175.5051 &&  30.9898  &  2.9942 &  $-$0.5108 &$-$185.1701 \\                   
				\hline    
			\end{tabular}}
		\end{table}

		\begin{table}[ht!]
			\footnotesize
			\centering
			\caption{{Empirical mean from simulated MXBS data for the indicated model, estimator, generator, $p$ and $n$.}}
			\label{tab:MXBS}
			\renewcommand{\arraystretch}{1.0}
			\resizebox{\linewidth}{!}{
				\begin{tabular}{lrrrrrrrrrrrrrrrrrrrrrr}
					\hline
					Generator &    &          && \multicolumn{4}{c}{BBS} &&  \multicolumn{4}{c}{BBSO} \\\cline{5-8}\cline{10-13}\\ [-0.2cm]
					
					& $n$  &   $p$        &&   \multicolumn{1}{c}{$\widehat\alpha$} & \multicolumn{1}{c}{$\widehat\beta$} & \multicolumn{1}{c}{$\widehat\delta$} &  \multicolumn{1}{c}{log-lik} &&   \multicolumn{1}{c}{$\widehat\alpha$} & \multicolumn{1}{c}{$\widehat\beta$} & \multicolumn{1}{c}{$\widehat\gamma$} &  \multicolumn{1}{c}{log-lik}
					\\\hline\\ [-0.2cm]
					MXBS($\alpha_1=0.1,\beta_1=0.5,\alpha_2=1.0,\beta_2=2.0,p$)& 10 & 0.25 && 0.6078  & 1.8486 &  0.2890 &$-$14.8013 &&  0.4622 &  1.7921 & $-$1.6841 &$-$15.4262 \\
					&    & 0.50 && 0.5013  &  1.5308&   0.7390&$-$10.8898 &&  0.3790 &  1.4707 & $-$1.7584 &$-$11.5081 \\
					&    & 0.75 && 0.3556  &1.1169  &1.1480   &$-$4.3414  && 0.2639  &1.0798   &$-$1.8864  &$-$5.0188\\ [0.05cm]
					& 50 & 0.25 && 0.8350  & 1.9243 &  0.3120 &$-$85.1100 &&  0.5803 &  1.7684 & $-$1.2486 &$-$87.6034\\
					&    & 0.50 && 0.6721  & 1.6953 &  0.7320 &$-$65.5651 &&  0.4772 &  1.4628 & $-$1.3057 &$-$68.2009\\
					&    & 0.75 && 0.4502  & 1.2270 &  1.2940 &$-$32.7972 &&  0.3395 &  1.0801 & $-$1.3767 &$-$36.8795\\[0.1cm]
					%
					MXBS($\alpha_1=1.0,\beta_1=0.5,\alpha_2=0.5,\beta_2=5.0,p$)& 10 & 0.25 && 0.3295 &  4.0274 & $-$0.0520 &$-$17.7737&&   0.2517&   3.9848&  $-$1.6814& $-$18.4817  \\
					&    & 0.50 && 0.3136 &  2.9319 & $-$0.2210 &$-$14.1957&&   0.2419&   2.8967&  $-$1.6687& $-$14.9392\\ 
					&    & 0.75 && 0.3161 &  1.8058 & $-$0.2920 &$-$9.4978 &&  0.2434 &  1.7908 & $-$1.6703 &$-$10.2457\\[0.05cm]
					& 50 & 0.25 && 0.4509 &   4.0453&  0.0570   &$-$99.7776&&    0.3215&  3.9677 & $-$1.2011 & $-$102.9411  \\
					&    & 0.50 && 0.4309 &  2.9188 & $-$0.0040 &$-$81.8596&&   0.3080 &  2.8846 & $-$1.1967 &$-$85.1153\\
					&    & 0.75 && 0.4322 &  1.7842 & $-$0.0130 &$-$57.9865&&   0.3085 &  1.7790 & $-$1.2038 &$-$61.1831  \\[0.1cm]
					MXBS($\alpha_1=2.5,\beta_1=1.0,\alpha_2=0.5,\beta_2=1.0,p$)& 10 & 0.25 && 0.5487 &  1.7084 &  1.2980 &$-$11.7933&&   0.4194  & 1.6472 & $-$1.7132 &$-$12.6414 \\
					&    & 0.50 &&     0.7658 &  2.0200 &  1.9640 &$-$15.2106&&   0.5700 &  1.9589 & $-$1.8182 &$-$15.9973 \\
					&    & 0.75 &&     1.0252 &  1.9457 &  1.6240 &$-$17.0309&&  0.7539  & 1.8879  & $-$1.8284 &$-$17.6494 \\[0.05cm]
					& 50 & 0.25 && 0.7016 &  1.9251 &  0.9980& $-$68.7779&&   0.5432 &  1.6049 & $-$1.2276 &$-$74.1506 \\
					&    & 0.50 && 0.9403 &  2.2861 &  1.4130& $-$85.5513&&   0.7108 &  1.9387 & $-$1.4005 &$-$89.7587\\
					&    & 0.75 && 1.2874 &  2.1852 &  1.2490& $-$94.6959&&   0.9241 &  1.8571 & $-$1.4378 &$-$96.8759\\
					\hline    
				\end{tabular}}
			\end{table}
			
		}

		\newpage
		\section{Real data analysis}\label{sec:5}
		
		
		The proposed BBS model is now used to analyse three lifetime data sets. 
		For comparison, the results of the bimodal BBSO model (bimodal BS distribution 
		proposed by \cite{omb:17}) and MXBS distribution introduced by \cite{bgkls:11}, in addition 
		to classical BS and LN models, are given as well.

		\begin{exmp}
			The first data set corresponds to the duration of the eruption for the Old Faithful geyser in 
			Yellowstone National Park, Wyoming, USA; see \cite{ab:90}. Descriptive statistics for the Old Faithful data set are 
			the following: $272$(sample size), $43$(minimum), $96$(maximum), $76$(median), $70.897$(mean), $13.595$(standard deviation), $19.176$(coefficient of variation), $-0.414$(coefficient of skewness) and $-1.156$(coefficient of kurtosis). 
			Table~\ref{tab:fit1} reports the ML estimates, computed by the BFGS method, SEs and log-likelihood (log-lik) values for the BBS, BBSO, MXBS, BS and LN models. 
			Furthermore, we report the Akaike (AIC) and Bayesian information (BIC) criteria. From this table, we note that the BBS and MXBS models provide better adjustments compared to the other models based on the values of AIC and BIC. The null hypothesis of a BS distribution ($\delta=0$) against an alternative 
			BBS distribution ($\delta\neq{0}$) can be tested by using the likelihood ratio (LR) test 
			$\textrm{LR}=-2(\ell_{\text{BS}}(\widehat\alpha,\widehat\beta)-\ell_{\text{BBS}}(\widehat\alpha,\widehat\beta,\widehat\delta))$. 
			In this case, we obtain $LR=-2(-1107.849+1050.592)=114.514$ and comparing it to the $5\%$ critical value from the chi-square distribution 
			with one degree of freedom ($\chi_{1}^{2}=3.84$), it supports rejection of the null hypothesis, thus 
			the BBS model outperforms, in terms of fitting, the BS one for the data under study.

			Figure~\ref{fig:3} shows the histogram of the data set superimposed with the fitted curves 
			of the BBS, BBSO, MXBS, BS and LN distributions. From this figure, we clearly note that the BBS captures quite well 
			the inherent bimodality of the data.

			\begin{table}[!ht]
				\centering
				\caption{ML estimates and model selection measures for fit to the Old Faithful data.}
				\label{tab:fit1}
				\renewcommand{\arraystretch}{1.3}
				\resizebox{\linewidth}{!}{
					\begin{tabular}{llcrcccccccccc}
						\hline
						Model     &              & Parameter     & \multicolumn{1}{c}{ML estimate}& SE       &   & log-lik      &   AIC      &   BIC    \\
						\hline\\[-0.25cm]
						BBS           &          & $\alpha$      &   0.1255                       &0.0034    &   &  $-$1050.592 & 2107.184   & 2118.001   \\[-0.2cm]
						&          & $\beta$       &   66.8612                      &0.4739    &   &              &            &            \\[-0.2cm]
						&          & $\delta$      &   $-$4                         &          &   &              &            &            \\[0.05cm]
						BBSO          &          & $\alpha$      &   0.0893                       & 0.0047   &   &  $-$1054.396 & 2114.792   & 2125.609   \\[-0.2cm]
						&          & $\beta$       &   65.7730                      & 0.4128   &   &              &            &            \\[-0.2cm]
						&          & $\gamma$      &   $-$2.1803                    & 0.1432   &   &              &            &            \\[0.05cm]
						MXBS          &          & $\alpha_{1}$  &   0.1150                       & 0.0046    &   &  $-$1032.681 & 2075.362   & 2093.391   \\[-0.2cm]
						&          & $\alpha_{2}$  &   0.0697                       & 0.0108    &   &              &            &            \\[-0.2cm]
						&          & $\beta_{1}$   &   54.8174                      & 0.4824    &   &              &            &            \\[-0.2cm]
						&          & $\beta_{2}$   &   80.1850                      & 0.7607    &   &              &            &            \\[-0.2cm]              
						&          & $p$           &   0.3762                       & 0.0317    &   &              &            &            \\[0.05cm]
						BS            &          & $\alpha$      &   0.2055                       & 0.0088   &   & $-$1107.849  & 2221.698   & 2226.91   \\[-0.2cm]
						&          & $\beta$       &   69.4289                      & 0.8608   &   &              &            &            \\[0.05cm]
						LN            &          & $\mu$         &   4.2411                       & 0.0124   &   &  $-$1108.300 & 2222.6     & 2227.812   \\[-0.2cm]
						&          & $\sigma$      &   0.2048                       & 0.0087   &   &              &            &            \\\hline\\[-0.20cm]
					\end{tabular}
				}
			\end{table}

			\begin{figure}[H]
				\centering
				\psfrag{0.00}[c][c]{\scriptsize{0.00}}
				\psfrag{0.01}[c][c]{\scriptsize{0.01}}
				\psfrag{0.02}[c][c]{\scriptsize{0.02}}
				\psfrag{0.03}[c][c]{\scriptsize{0.03}}
				\psfrag{0.04}[c][c]{\scriptsize{0.04}}
				\psfrag{0.06}[c][c]{\scriptsize{0.06}}
				\psfrag{0.07}[c][c]{\scriptsize{0.07}}
				\psfrag{0.05}[c][c]{\scriptsize{0.05}}

				\psfrag{40}[c][c]{\scriptsize{40}}
				\psfrag{50}[c][c]{\scriptsize{50}}
				\psfrag{60}[c][c]{\scriptsize{60}}
				\psfrag{70}[c][c]{\scriptsize{70}}
				\psfrag{80}[c][c]{\scriptsize{80}}
				\psfrag{90}[c][c]{\scriptsize{90}}
				\psfrag{100}[c][c]{\scriptsize{100}}
				
				\psfrag{de}[c][c]{\scriptsize{PDF}}
				\psfrag{db}[c][c]{\scriptsize{waiting times}}
				
				\psfrag{a}[l][c]{\tiny{BBS}}
				\psfrag{b}[l][c]{\tiny{BBSO}}
				\psfrag{c}[l][c]{\tiny{MXBS}}
				\psfrag{d}[l][c]{\tiny{BS}}
				\psfrag{e}[l][c]{\tiny{LN}}
				
				{\includegraphics[height=8.5cm,width=5.0cm,angle=-90]{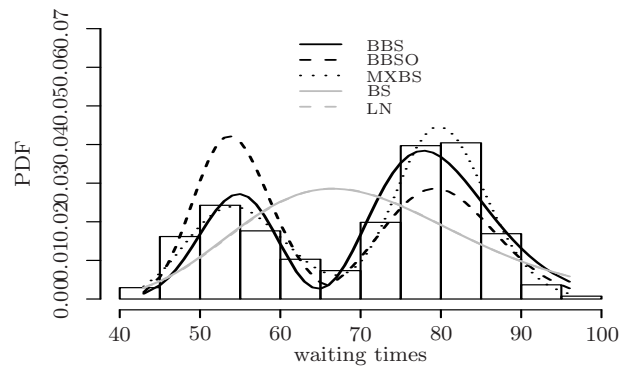}}
				\caption{Histogram of waiting times until the next eruption (from the Old Faithful data) 
					overlaid with the fitted densities.}\label{fig:3}
				\vspace{-0.45cm}
			\end{figure}
		\end{exmp}
		\begin{exmp}
			The data used here, which are given by \cite{ah:85} who attribute
			them to a study by \cite{btf:84}, present the stress rapture life in hours of Kevlar-49/epoxy
			strands when subjected to a constant sustained pressure until failure. A
			descriptive summary for the Kevlar-49/epoxy data set provides the following values: $49$(sample size), $1051$(minimum), 
			$17568$(maximum), $8831$(median), $8805.694$(mean), $4553.915$ (standard deviation), $51.176$(coefficient of variation), $0.094$(coefficient of skewness) and $-0.915$(coefficient of kurtosis).

			Table~\ref{tab:fit2} reports the ML estimates, SEs and log-lik values associated with the BBS, BBSO, MXBS, 
			BS and LN models. Furthermore, we report the values of AIC and BIC. From this table, observe that the proposed BBS model has the lowest values for the AIC 
			and BIC, suggesting that this model provides 
			the best fit to Kevlar-49/epoxy data. To test the null 
			hypothesis of a BS distribution ($\delta=0$) against an alternative 
			BBS distribution ($\delta\neq{0}$), we use the LR test. The result $LR=-2(-488.4345+480.049)=16.771$ 
			supports the BBS model assumption and rejects the BS model 
			for this data set. This result suggest that the BBS distribution is indeed a good model for the 
			Kevlar-49/epoxy data. A graphical comparison of the fitted BBS, BBSO, MXBS, BS and LN
			distributions is given in Figure \ref{fig:4}.

			\begin{table}[htbp]
				\centering
				\caption{\small {ML estimates and model selection measures for fit to the Kevlar-49/epoxy data.}}
				\label{tab:fit2}
				\renewcommand{\arraystretch}{1.3}
				\resizebox{\linewidth}{!}{
					\begin{tabular}{llcrrccccccccc}
						\hline
						Model     &              & Parameter     & \multicolumn{1}{c}{ML estimate} & SE        &  & log-lik      &   AIC      &   BIC    \\
						\hline\\[-0.25cm]
						BBS           &          & $\alpha$      &   0.5933                        & 0.0504    & &$-$480.049    & 966.098    & 971.773   \\[-0.2cm]
						&          & $\beta$       &  4507.365                       & 376.7557  & &              &            &            \\[-0.2cm]
						&          & $\delta$      &   $-$2                          &           & &              &            &            \\[0.05cm]
						BBSO          &          & $\alpha$      &   0.4679                        & 0.08655   & &$-$490.208   & 986.417    & 992.092   \\[-0.2cm]
						&          & $\beta$       &  5036.113                       & 437.0908  & &             &            &            \\[-0.2cm]
						&          & $\gamma$      &   $-$1.5345                     & 0.4708    & &             &            &            \\[0.05cm]
						MXBS          &          & $\alpha_{1}$  &   0.3412                       & 0.0166      & &$-$593.987   & 1197.974   & 1207.433   \\[-0.2cm]
						&          & $\alpha_{2}$  &   0.0697                       & 0.0123      & &             &            &            \\[-0.2cm]
						&          & $\beta_{1}$   &   10278.38                     & 979.1201    & &             &            &            \\[-0.2cm]
						&          & $\beta_{2}$   &   4577.872                     & 50.2859     & &             &            &            \\[-0.2cm]              
						&          & $p$           &   0.6708                       & 0.0991      & &             &            &            \\[0.05cm]
						BS            &          & $\alpha$      &   0.7520                        & 0.0759    &  &$-$488.434   & 982.869    & 984.652   \\[-0.2cm]
						&          & $\beta$       &   6800.546                      & 679.8509  &  &             &            &            \\[0.05cm]
						LN            &          & $\mu$         &   8.8925                        & 0.1001    &  &$-$487.873   & 981.746    & 983.530   \\[-0.2cm]
						&          & $\sigma$      &   0.7012                        & 0.0708    &  &             &            &            \\\hline\\[-0.20cm]
					\end{tabular}
				}
			\end{table}

			\begin{figure}[htbp]
				\vspace{-0.25cm}
				\centering
				\psfrag{0.00000}[c][c]{\scriptsize{0}}
				\psfrag{0.00005}[c][c]{\scriptsize{5e-5}}
				\psfrag{0.00010}[c][c]{\scriptsize{10e-5}}
				\psfrag{0.00015}[c][c]{\scriptsize{15e-5}}
				\psfrag{0.00020}[c][c]{\scriptsize{20e-5}}
				\psfrag{0.00025}[c][c]{\scriptsize{25e-5}}
				\psfrag{5000}[c][c]{\scriptsize{5000}}
				\psfrag{10000}[c][c]{\scriptsize{10000}}
				\psfrag{15000}[c][c]{\scriptsize{15000}}
				\psfrag{0}[c][c]{\scriptsize{0}}
				\psfrag{1}[c][c]{\scriptsize{1}}
				\psfrag{2}[c][c]{\scriptsize{2}}
				\psfrag{3}[c][c]{\scriptsize{3}}
				\psfrag{4}[c][c]{\scriptsize{4}}
				\psfrag{5}[c][c]{\scriptsize{5}}
				\psfrag{6}[c][c]{\scriptsize{6}}
				\psfrag{8}[c][c]{\scriptsize{8}}
				\psfrag{10}[c][c]{\scriptsize{10}}
				\psfrag{12}[c][c]{\scriptsize{12}}
				\psfrag{15}[c][c]{\scriptsize{15}}
				\psfrag{20}[c][c]{\scriptsize{20}}
				\psfrag{25}[c][c]{\scriptsize{25}}
				\psfrag{30}[c][c]{\scriptsize{30}}
				\psfrag{0.00}[c][c]{\scriptsize{0.00}}
				\psfrag{0.05}[c][c]{\scriptsize{0.05}}
				\psfrag{0.10}[c][c]{\scriptsize{0.10}}
				\psfrag{0.15}[c][c]{\scriptsize{0.15}}
				\psfrag{0.20}[c][c]{\scriptsize{0.20}}
				\psfrag{0.25}[c][c]{\scriptsize{0.25}}
				\psfrag{0.30}[c][c]{\scriptsize{0.30}}
				\psfrag{0.0}[c][c]{\scriptsize{0.0}}
				\psfrag{0.2}[c][c]{\scriptsize{0.2}}
				\psfrag{0.3}[c][c]{\scriptsize{0.3}}
				\psfrag{0.4}[c][c]{\scriptsize{0.4}}
				\psfrag{0.5}[c][c]{\scriptsize{0.5}}
				\psfrag{0.6}[c][c]{\scriptsize{0.6}}
				\psfrag{0.8}[c][c]{\scriptsize{0.8}}
				\psfrag{1.0}[c][c]{\scriptsize{1.0}}
				\psfrag{1.2}[c][c]{\scriptsize{1.2}}
				\psfrag{1.5}[c][c]{\scriptsize{1.5}}
				\psfrag{2.0}[c][c]{\scriptsize{2.0}}
				\psfrag{2.5}[c][c]{\scriptsize{2.5}}
				\psfrag{3.0}[c][c]{\scriptsize{3.0}}
				\psfrag{de}[c][c]{\scriptsize{PDF}}
				\psfrag{db}[c][c]{\scriptsize{failure times}}
				
				\psfrag{a}[l][c]{\tiny{BBS}}
				\psfrag{b}[l][c]{\tiny{BBSO}}
				\psfrag{c}[l][c]{\tiny{MXBS}}
				\psfrag{d}[l][c]{\tiny{BS}}
				\psfrag{e}[l][c]{\tiny{LN}}
				
				{\includegraphics[height=8.5cm,width=5.0cm,angle=-90]{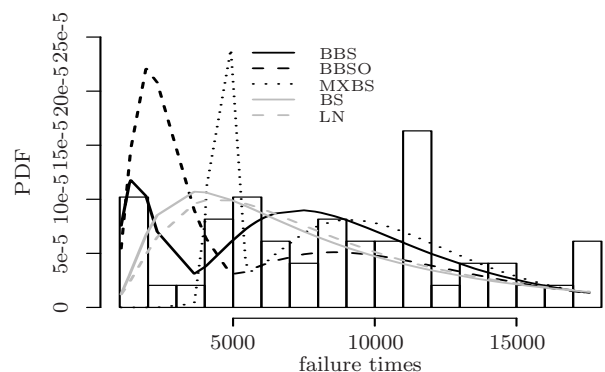}}
				\caption{Histogram of Kevlar 49/epoxy strands failure times 
					(70\% pressure) overlaid with the fitted densities.}\label{fig:4}
				\vspace{-0.45cm}
			\end{figure}

		\end{exmp}
		
		\begin{exmp}
			The third data set corresponds to the lifetimes of adult flies in days after exposure to a pest control technique, which consists of 
			using small portions of food laced with an insecticide that kills the flies. The experiment was carried out at the Department of Entomology of the Luiz
			de Queiroz School of Agriculture, University of S\~ao Paulo, Brazil. In this technique, the period was set at 51 days
			such that larvae that survived beyond this period are considered as censored cases; see \cite{setal:13} for more details about 
			this experiment. Descriptive statistics for the Entomology data are 
			the following: $\textrm{sample size} = 172$ (four cases are lost), $\textrm{minimum} = 1.000$, $\textrm{maximum} = 51.000$, 
			$\textrm{median} = 21.000$, $\textrm{mean} = 21.878$, $\textrm{standard deviation} = 11.674$, 
			$\textrm{coefficient of variation} = 53.30$, $\textrm{coefficient of skewness} = 0.818$ and 
			$\textrm{coefficient of kurtosis} = 0.569$. 
			
			The ML estimates and log-lik values for the BBS, BBSO, MXBS, BS and LN models are reported in 
			Table~\ref{tab:fit3}. Furthermore, the AIC and BIC values 
			are also reported in this table. From Table~\ref{tab:fit3}, 
			we note that the proposed BBS model has the lowest AIC and BIC values, and therefore it could be chosen 
			as the best model. Using the LR statistic to compare the fits of the BS and BBS models, 
			that is, the null hypothesis of a BS distribution ($\delta=0$) against an alternative 
			BBS distribution ($\delta\neq{0}$), we obtain $LR=-2(-676.913+610.523)=132.780$ 
			and then we could accept the BBS model. Figure \ref{fig:5} 
			shows the fitted PDFs and SFs (by Kaplan-Meier (KM) estimator) of the BBS, BBSO, BS and LN distributions.

			\begin{table}[H]
				\centering
				\caption{\small {ML estimates and model selection measures for fit to the Entomology data.}}
				\label{tab:fit3}
				\renewcommand{\arraystretch}{1.3}
				\resizebox{\linewidth}{!}{
					\begin{tabular}{llcrrccccccccc}
						\hline
						Model     &              & Parameter  & \multicolumn{1}{c}{ML estimate}& SE     & log-lik      & AIC        & BIC    \\
						\hline\\[-0.25cm]
						BBS           &          & $\alpha$   &  0.6922                        & 0.0385 & $-$610.523   & 1227.052   & 1236.494   \\[-0.2cm]
						&          & $\beta$    &  8.7636                        & 0.5311 &              &            &            \\[-0.2cm]
						&          & $\delta$   &  $-$2                          &        &              &            &            \\[0.05cm]
						BBSO          &          & $\alpha$   &  0.4975                        & 0.0437 & $-$663.144   & 1332.287   & 1341.730   \\[-0.2cm]
						&          & $\beta$    &  7.5959                        & 0.5250 &              &            &            \\[-0.2cm]
						&          & $\gamma$   &  $-$2.2434                     & 0.2859 &              &            &            \\ [0.05cm]
						MXBS          &          & $\alpha_{1}$&   1.2349                      & 0.0115 & $-$631.137   & 1266.273   & 1272.568   \\[-0.2cm]
						&          & $\alpha_{2}$&   0.2104                      & 0.0009 &              &            &            \\[-0.2cm]
						&          & $\beta_{1}$ &   14.6760                     & 2.7919 &              &            &            \\[-0.2cm]
						&          & $\beta_{2}$ &   19.9960                     & 0.4717 &              &            &            \\[-0.2cm]              
						&          & $p$         &   0.60                        &        &              &            &            \\[0.05cm]
						BS            &          & $\alpha$    &   0.8912                      & 0.0500 & $-$676.913   & 1357.862   & 1364.157   \\[-0.2cm]
						&          & $\beta$     &   16.1512                     & 1.0078 &              &            &            \\[0.20cm]
						LN            &          & $\mu$       &   2.9139                      & 0.0583 & $-$660.230   & 1324.461   & 1330.756   \\[-0.2cm]
						&          & $\sigma$    &   0.7613                      & 0.0708 &              &            &            \\\hline
					\end{tabular}
				}
			\end{table}

			\begin{figure}[H]
				\vspace{-0.25cm}
				\centering
				
				\psfrag{0}[c][c]{\scriptsize{0}}
				\psfrag{10}[c][c]{\scriptsize{10}}
				\psfrag{20}[c][c]{\scriptsize{20}}
				\psfrag{30}[c][c]{\scriptsize{30}}
				\psfrag{40}[c][c]{\scriptsize{40}}
				\psfrag{50}[c][c]{\scriptsize{50}}
				\psfrag{0.00}[c][c]{\scriptsize{0.00}}
				\psfrag{0.05}[c][c]{\scriptsize{0.05}}
				\psfrag{0.10}[c][c]{\scriptsize{0.10}}
				\psfrag{0.15}[c][c]{\scriptsize{0.15}}
				\psfrag{0.20}[c][c]{\scriptsize{0.20}}
				\psfrag{0.25}[c][c]{\scriptsize{0.25}}
				\psfrag{0.2}[c][c]{\scriptsize{0.20}}
				\psfrag{0.4}[c][c]{\scriptsize{0.40}}
				\psfrag{0.6}[c][c]{\scriptsize{0.60}}
				\psfrag{0.8}[c][c]{\scriptsize{0.80}}
				\psfrag{1.0}[c][c]{\scriptsize{1.00}}
				\psfrag{de}[c][c]{\scriptsize{PDF}}
				\psfrag{db}[c][c]{\scriptsize{survival times}}
				\psfrag{st1}[c][c]{\scriptsize{SF}}
				\psfrag{t1}[c][c]{\scriptsize{survival times}}
				\psfrag{km}[l][c]{\tiny{KM}}
				\psfrag{a}[l][c]{\tiny{BBS}}
				\psfrag{b}[l][c]{\tiny{BBSO}}
				\psfrag{c}[l][c]{\tiny{MXBS}}
				\psfrag{d}[l][c]{\tiny{BS}}
				\psfrag{e}[l][c]{\tiny{LN}}
				
				{\includegraphics[height=5.5cm,width=5.5cm,angle=-90]{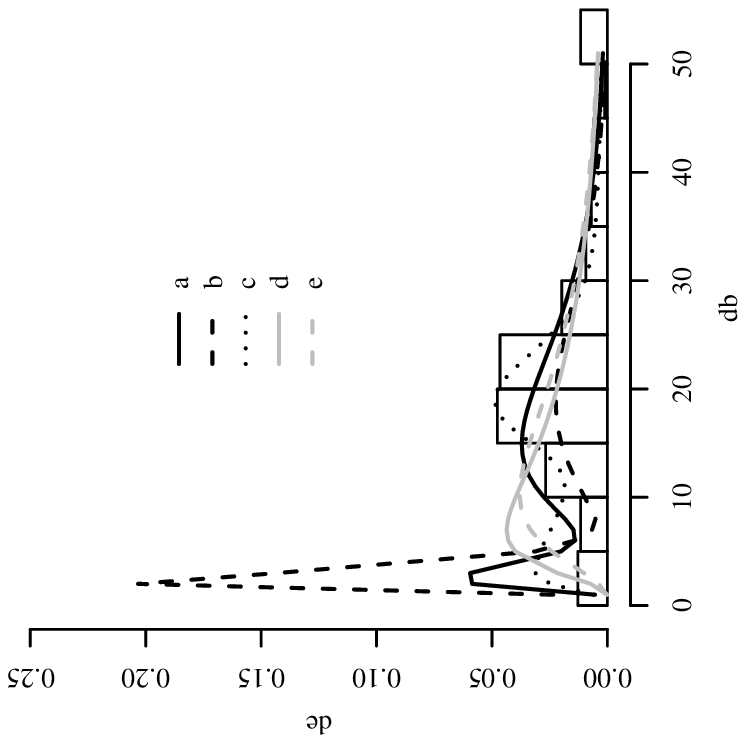}}
				{\includegraphics[height=5.5cm,width=5.5cm,angle=-90]{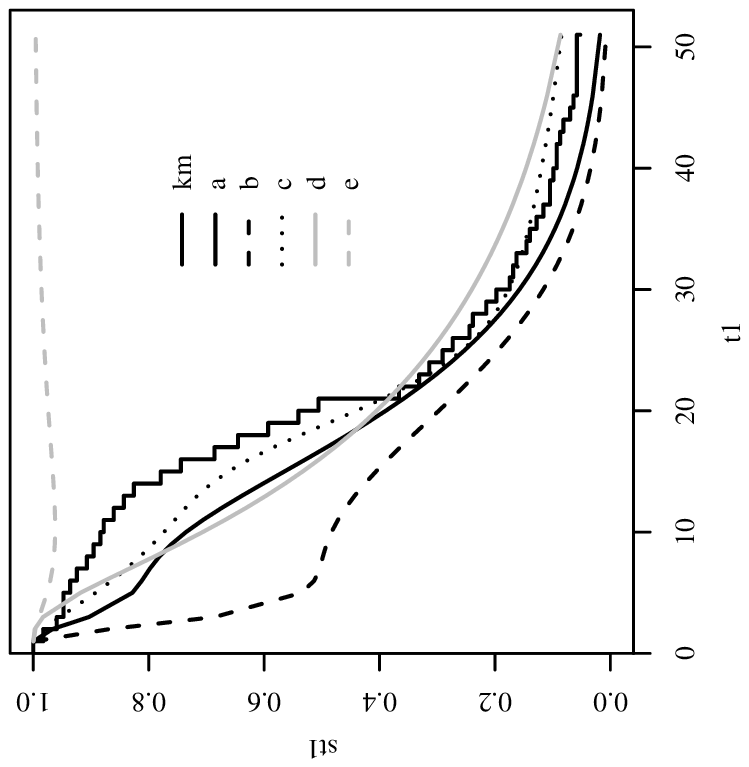}}
				\caption{Histogram and SF fitted by KM with the Entomology data.}\label{fig:5}
				\vspace{-0.45cm}
			\end{figure}

		\end{exmp}


		\section{Concluding remarks}\label{sec:6}
		In this work, we have introduced a bimodal generalization 
		of the Birnbaum-Saunders distribution, based on the 
		alpha-skew-normal distribution. We have discussed some of its properties. 
		We have considered estimation and inference based on likelihood methods. We have carried out a Monte Carlo simulation study to evaluate the behavior of the 
		maximum likelihood estimators of the corresponding parameters. Three real data sets were considered to illustrate the potentiality of the proposed model. 
		In general, the results have shown that the proposed bimodal Birnbaum-Saunders distribution outperforms some existing models in the literature. As part of future research, 
		it is of interest to study univariate and multivariate bimodal Birnbaum-Saunders regression models; see \cite{rn:91}, \cite{bz:15} and \cite{mlc:16}. Moreover, time 
		series models based on the bimodal Birnbaum-Saunders distribution with corresponding influence diagnostic tools can also be considered; see \cite{slla:17}. 
		Work on these problems is currently under progress and we hope to report these findings in a future paper.
		\section*{Acknowledgments}
		The authors thank the Editors and reviewers for their constructive comments on an
		earlier version of this manuscript. The research was partially supported by CNPq and CAPES grants from the Brazilian government.


\begin{thebibliography}{}
			
			\bibitem[Andrews and Herzberg(1985)]{ah:85}
			{Andrews, D. F. and Herzberg, A. M.} (1985) 
			\emph{Data: A Collection of Problems
				from Many Fields for the Student and Research Worker} (Springer Series in
			Statistics). 
			Springer: New York. \url{10.1007/978-1-4612-5098-2}
			
			\bibitem[Azzalini and Bowman(1990)]{ab:90}
			{Azzalini, A. and Bowman, A. W.} (1990) 
			A look at some data on the Old Faithful geyser. 
			\emph{Applied Statistics}, {\bf 39}, 357--365. \url{doi:10.2307/2347385}
			
			\bibitem[Azzalini and Capitanio(2003)]{ac:03}
			{Azzalini A. and Capitanio A.} (2003) 
			Distributions generate by perturbation of symmetry with emphasis on a 
			multivariate skew-$t$ distribution.
			\emph{JJ. R. Stat. Soc. Ser. B Stat. Methodol}, {\bf 65}, 367--389. 
			
			\bibitem[Balakrishnan et al.(2007)]{bll:07}
			{Balakrishnan, N., Leiva V. and L\'{o}pez, J.} (2007) 
			Acceptance sampling plans from truncated life tests based 
			on the generalized Birnbaum-Saunders distribution.
			\emph{Communications in Statistics - Simulation and Computation}, {\bf 36}, 643--656. \url{http://dx.doi.org/10.1080/03610910701207819}
			
			\bibitem[Balakrishnan et al.(2009)]{blsv:09}
			{Balakrishnan, N., Leiva, V., Sanhueza, A. and Vilca, F.} (2009) 
			Estimation in the Birnbaum-Saunders distribution based on scale-mixture of normals.
			\emph{Statistics and Operations Research Transactions}, {\bf 33}, 171--192. 
			
			\bibitem[Balakrishnan et al.(2011)]{bgkls:11}
			{Balakrishnan, N., Gupta, R. C., Kundu, D.,  Leiva, V. and Sanhueza, A.} (2011) 
			On some mixture models based on the Birnbaum-Saunders distribution and associated inference.
			\emph{Journal of Statistical Planning and Inference}, {\bf 141}, 2175--2190. 
			
			\bibitem[Balakrishnan and Zhu(2015)]{bz:15}
			{Balakrishnan, N. and Zhu, X.} (2015) 
			Inference for the Birnbaum-Saunders lifetime regression model with applications.
			\emph{Communications in Statistics - Simulation and Computation}, {\bf 44}, 2073--2100. \url{http://dx.doi.org/10.1080/03610918.2013.844838}
			
			\bibitem[Barlow et al.(1984)]{btf:84}
			{Barlow, R. E., Toland, R. H. and Freeman, T.} (1984) 
			\emph{A Bayesian analysis of
				stress-rupture life of Kevlar 49/epoxy spherical pressure vessels.} 
			In T. D. Dwivedi
			(Ed.), Proceedings of the Canadian Conference in Applied Statistics. 
			Marcel Dekker: New York. 
			
			\bibitem[Bhatti(2010)]{b:10}
			{Bhatti, C. R.} (2010) 
			The Birnbaum-Saunders autoregressive conditional
			duration model.
			\emph{Mathematics and Computers in Simulation}, {\bf 80}, 2062--2078. 
			
			\bibitem[Birnbaum and Saunders(1969)]{bs:69}
			{Birnbaum, Z. W. and Saunders, S. C.} (1969) 
			A new family of life distributions. 
			\emph{Journal of Applied Probability}, {\bf 6}, 319--327. 
			
			\bibitem[Celeux et al.(2006)]{celeuxetal:06}
			{Celeux, G., Forbes, F., Robert, C. P. and Titterington, D. M.} (2006) 
			Deviance information criteria for missing data models. 
			\emph{Bayesian Analysis}, {\bf 4}, 651--673. 
			
			\bibitem[Chung(2001)]{clch:01}
			{Chung, K. L.} (2001) 
			\emph{A Course in Probability Theory.} Third edition.
			Academic Press, Inc.: San Diego, CA. 
			
			\bibitem[D\'iaz-Garc\'ia and Leiva(2005)]{dl:05}
			{D\'iaz-Garc\'ia, J. A. and Leiva, V.} (2005) 
			A new family of life distributions based on elliptically contoured distributions. 
			\emph{Journal of Statistical Planning and Inference}, {\bf 128}, 445--457.
			
			\bibitem[Elal-Olivero(2010)]{e:10}
			{Elal-Olivero, D.} (2010) 
			Alpha-skew-normal distribution.
			\emph{Proyecciones Journal of Mathematics}, {\bf 29}, 224--240. 
			
			\bibitem[Efron and Hinkley(1978)]{eh:78}
			Efron B, Hinkley DV. (1978) 
			Assessing the accuracy of the maximum likelihood estimator: {O}bserved vs. expected Fisher information.
			\emph{Biometrika}, {\bf 65}, 457--487.
			
			\bibitem[Fonseca and Cribari(2018)]{fc:16}
			{Fonseca, R. and Cribari-Neto, F.} (2018) 
			Inference in a bimodal Birnbaum-Saunders model.
			\emph{Mathematics and Computers in Simulation}, {\bf 146}, 134--159.
			
			\bibitem[Glaser(1980)]{Glaser}
			Glaser, R. E. (1980) Bathtub and related failure rate characterizations.
			\emph{Journal of the American Statistical Association}, {\bf 75}, 667--672. 
			
			\bibitem[G\'omez et al.(2011)]{ges:11}
			{G\'omez, H. W., Elal-Olivero D., Salinas H. S. and Bolfarine H.} (2011) 
			Bimodal extension based on the skew-normal 
			distribution with application to pollen data.
			\emph{Environmetrics}, {\bf 22}, 50--62. 
			
			\bibitem[Kim(2005)]{k:05}
			{Kim, H. J.} (2005) On a class of two-piece skew-normal distributions.
			\emph{Statistics}, {\bf 39}, 537--553. 
			
			\bibitem[Leao et al.(2017)]{llst:17}
			{Leao, J., Leiva, V., Saulo, H., Tomazella, V.} (2017) 
			Birnbaum-Saunders frailty regression models: Diagnostics and application to medical data.
			\emph{Biometrical Journal}, {\bf 59}, 291--314. 
			
			\bibitem[Leiva(2016)]{l:16}
			{Leiva, V.} (2016) 
			\emph{The Birnbaum-Saunders Distribution.} 
			Elsevier/Academic Press: Amsterdam. 
			
			\bibitem[Leiva et al.(2014a)]{lmsar:14}
			{Leiva, V., Marchant, C., Saulo, H., Aslam, M. and Rojas, F.} (2014a) 
			Capability indices for 
			Birnbaum-Saunders processes applied to electronic and food industries.
			\emph{Journal of Applied Statistics}, {\bf 41}, 1881--1902. 
			
			\bibitem[Leiva et al.(2014b)]{lsml:14}
			{Leiva, V., Saulo, H., Le\~ao, J. and Marchant, C.} (2014c) 
			A family of autoregressive 
			conditional duration models applied to financial data.
			\emph{Computational Statistics and Data Analysis}, {\bf 79}, 175--191. 
			
			\bibitem[Lemonte et al.(2008)]{lsc:08}
			{Lemonte, A. J., Simas, A. B. and Cribari-Neto, F.} (2008) 
			Bootstrap-based improved estimators for the
			two-parameter Birnbaum-Saunders distribution.
			\emph{Journal of Statistical Computation and Simulation}, {\bf 78}, 37--49. 
			
			\bibitem[Lin et al.(2007a)]{llh:07}
			{Lin, T. I., Lee, J. C., Hsieh, W. J.} (2007a) 
			Robust mixture models using the skew-$t$ distribution.
			\emph{Statistics and Computing}, {\bf 17}, 81--92. 
			
			\bibitem[Lin et al.(2007b)]{lly:07}
			{Lin, T. I., Lee, J. C. and Yen, S. Y.} (2007b) 
			Finite mixture modeling using the skew-normal distribution.
			\emph{Statistica Sinica}, {\bf 17}, 81--92. 
			
			\bibitem[Ling and Pan(1998)]{lp:98}
			{Ling, J. and Pan, J.} (1998) 
			A new method for selection of population distribution and parameter estimation. 
			\emph{Reliability Engineering and System Safety}, {\bf 60}, 247--255. \url{https://doi.org/10.1016/S0951-8320(97)00171-3}
			
			\bibitem[Ma and Genton (2004)]{mg:04}
			{Ma, Y. and Genton, M. G.} (2004) 
			\emph{Flexible class of skew-symmetric distributions.} 
			\emph{Scandinavian Journal of Statistics}, {\bf 31}, 459--468. 
			
			\bibitem[Marchant et al.(2016)]{mlc:16}
			{Marchant, C., Leiva, V. and Cysneiros, F.J.A.} (2016) 
			A multivariate log-linear model for Birnbaum-Saunders distributions.
			\emph{IEEE Transactions on Reliability}, {\bf 65}, 816--827. \url{10.1109/TR.2015.2499964}
			
			\bibitem[Mittelhammer(2000)]{mjm:00}
			Mittelhammer RC, Judge GG, Miller, DJ. 
			\emph{Econometric Foundations}.
			Cambridge, UK: Cambridge University Press; 2000.
			
			\bibitem[Natanson(1955)]{nts:55}
			{Natanson, I. P.} (1955) 
			\emph{Theory of functions of a real variable (Translated by Leo F. Boron with the collaboration of Edwin Hewitt).} 
			Frederick Ungar Publishing Co.: New York. 
			
			\bibitem[Ng et. al.(2003)]{ng:03}
			{Ng. H.K.T., Kundu, D. and Balakrishnan} (2003) 
			Modified moments estimation for the two-parameter Birnbaum-Saunders distribution
			\emph{Computational Statistics \& Data Analysis}, {\bf 43}, 283--298.
			
			\bibitem[Olmos et al.(2017)]{omb:17}
			{Olmos, N. O., Mart\'inez-Fl\'orez and M., Bolfarine, H.} (2017) 
			Bimodal Birnbaum-Saunders
			distribution with applications to non-negative measurements.
			\emph{Communications in Statistics - Theory and Methods}, {\bf 46}, 6240--6257. 
			
			\bibitem[Paula et al.(2012)]{plbl:12}
			{Paula, G. A., Leiva, V., Barros M. and Liu, S.} (2012) 
			Robust statistical modeling
			using the Birnbaum-Saunders-t distribution applied to insurance.
			\emph{Applied Stochastic Models in Business and Industry}, {\bf 28}, 16--34. 
			
			\bibitem[R-Team, 2018]{r:18}
			R-Team (2018).
			\newblock {\em {R: A Language and Environment for Statistical Computing}}.
			\newblock R Foundation for Statistical Computing, Vienna, Austria.
			
			
			\bibitem[Rieck and Nedelman(1991)]{rn:91}
			{Rieck, J. R. and Nedelman, J. R.} (1991) 
			A log-linear model for the Birnbaum-Saunders distribution.
			\emph{Technometrics}, {\bf 33}, 51--60. \url{10.2307/1269007}
			
			\bibitem[Saulo et al.(2013)]{slzm:13}
			{Saulo, H., Leiva, V., Ziegelmann, F.A. and Marchant, C.} (2010) 
			A nonparametric method for estimating asymmetric densities based on
			skewed Birnbaum-Saunders distributions applied to environmental data.
			\emph{Stochastic Environmental Research and Risk Assessment}, {\bf 27}, 1479--1491. \url{https://doi.org/10.1007/s00477-012-0684-8}
			
			\bibitem[Saulo et al.(2017)]{slla:17}
			{Saulo, H., Leao, J., Leiva, V. and Aykroyd, R. G.} (2017) 
			Birnbaum-Saunders autoregressive conditional duration models applied to high-frequency financial data.
			\emph{Statistical Papers}. Available at \url{https://doi.org/10.1007/s00362-017-0888-6}. 
			
			\bibitem[Shannon and Weaver(1949)]{SW49}
			{ Shannon C. E. and Weaver, W.} (1949) 
			The Mathematical Theory of Communication. 
			\emph{Univ. of Illinois Press, Urbana, IL.}
			
			\bibitem[Silva et al.(2013)]{setal:13}
			{Silvia M. A., Bezerra-Silva G. C. D, Vendramim J. D. and Mastrangelo T. } (2013) 
			Sublethal effect of neem extract on Mediterranean fruit fly adults.
			\emph{Revista Brasileira de Fruticultura}, {\bf 35}, 93--101.
			
			\bibitem[Vilca et al.(2010)]{vslc:10}
			{Vilca, F., Sanhueza, A., Leiva, V. and Christakos, G.} (2010) 
			An extended Birnbaum-Saunders model and 
			its application in environmental quality in Santiago.
			\emph{Stochastic Environmental Research and Risk Assessment}, {\bf 24}, 771--782. \url{https://doi.org/10.1007/s00477-009-0363-6}
			
			\bibitem[Vinberg(2003)]{vin:03}
			{Vinberg, E. B.} (2003) 
			A course in algebra. 
			\emph{American Mathematical Society, Providence, R.I.}, ISBN 0-8218-3413-4 .
		\end{thebibliography}
\end{document}